\newtheorem{definition}{Definition}[section]
\newtheorem{theorem}{Theorem}[section]
\newtheorem{claim}{Claim}[section]
\newtheorem{proposition}{Proposition}[section]
\newtheorem{fact}{Fact}[section]
\newtheorem{remark}{Remark}[section]
\def\bbp{\mathbb{P}}
\def\bbr{\mathbb{R}} 
\def\bbs{\mathbb{S}}
\def\ca{\mathcal{A}}
\def\cb{\mathcal{B}}
\def\cd{\mathcal{D}}
\def\cf{\mathcal{F}}
\def\ch{\mathcal{H}}
\def\cm{\mathcal{M}}
\def\cn{\mathcal{N}}
\def\cp{\mathcal{P}}
\def\cw{\mathcal{W}}
\def\c{\mathfrak{c}}
\def\iff{\longleftrightarrow}
\def\then{\longrightarrow}
\def\force{\Vdash}
\def\hold{\vDash}
\def\restricted{\upharpoonright}
\def\iff{\longleftrightarrow}
\def\then{\longrightarrow}
\def\<{\langle}
\def\>{\rangle}
\title{Note on $s_0$ nonmeasurable unions}
\author{Robert Rałowski}\thanks{The work has been partially financed by NCN means granted by decision DEC-2011/01/B/ST1/01439.}
\address{ Institute of 
         Mathematics and Computer Science, 
         Wroc{\l}aw University of Technology, Wybrze\.ze Wyspia\'n\-skie\-go 27, 
         50-370 Wroc{\l}aw, Poland.}
\email[Robert Ra{\l}owski]{robert.ralowski@pwr.wroc.pl}
\begin{document}

\begin{abstract} In this note we consider an arbitrary families of sets of $s_0$ ideal introduced by Marczewski-Szpilrajn. We show that in any uncountable Polish space $X$ and under some combinatorial and set theoretical assumptions ($cov(s_0)=\c$ for example), that for any family $\ca\subseteq s_0$ with $\bigcup\ca =X$, we can find a some subfamily $\ca'\subseteq\ca$ such that the union $\bigcup\ca'$ is not $s$-measurable. We have shown a consistency of the $cov(s_0)=\omega_1<\c$ and existence a partition of the size $\omega_1$ $\ca\in [s_0]^{\omega}$ of the real line $\bbr$, such that there exists a subfamily $\ca'\subseteq\ca$ for which $\bigcup\ca'$ is $s$-nonmeasurable. We also showed that it is relatively consistent with ZFC theory that $\omega_1<\c$ and existence of m.a.d. family $\ca$ such that $\bigcup\ca$ is $s$-nonmeasurable in Cantor space $2^\omega$ or Baire space $\omega^\omega$. The consistency of $a<cov(s_0)$ and $cov(s_0)<a$ is proved also.
\end{abstract}
\maketitle

\section{Definitions}

Let $X$ be any uncountable Polish space and consider an arbitrary $\sigma$-ideal $I\subset P(X)$ then let us recall the cardinal coefficients
\begin{itemize}
\item $non(I)=\min \{ |F|:\; F\subseteq X\land F\notin I\},$
\item $add(I)=\min \{ |\ca|:\; \ca\subseteq I\land \bigcup\ca\notin I \},$
\item $cov(I)=\min \{ |\ca|:\; \ca\subseteq I\land \bigcup\ca = X\},$
\item $cov_h(I)=\min \{ |\ca|:\; \ca\subseteq I\land (\exists P\in Perf(X))\; P\subseteq \bigcup\ca\}$.
\end{itemize}

Marczewski introduced the notion $s$ measurability and $s_0$-ideal now we recall these definitions, see \cite{Marczewski}.

\begin{definition}[$s_0$ Marczewski ideal] Let $X$ be any fixed uncountable Polish space. Then we say that $A\in P(X)$ is in $s_0$ Marczewski ideal iff
$$
(\forall P\in Perf(X))(\exists Q\in Perf(X))\; Q\subseteq P\land Q\cap A=\emptyset.
$$
\end{definition}

\begin{definition}[$s$ measurable set] Let $X$ be any fixed uncountable Polish space. Then we say that $A\in P(X)$ is {\bf $s$-measurable} iff
$$
(\forall P\in Perf(X))(\exists Q\in Perf(X))\; (Q\subseteq A)\lor (Q\subseteq P\land Q\cap A=\emptyset).
$$
Moreover, the set $A\in P(X)$ is {\bf completely $s$-nonmeasurable} if
$$
(\forall P\in Perf(X))\; P\cap A\ne\emptyset\land P\cap A^c\ne \emptyset,
$$
where $A^c$ denotes completion of the set A in space $X$.
\end{definition}

It is well known by Judach Miller Shelah see \cite{JMS} and Repicki see \cite{Rep} that $add(s_0)\le cov(s_0)\le cof(\c)\le non(s_0)=\c<cof(s_0)\le 2^\c$.

\begin{definition} We say that the family $\ca\subset P(X)$ is {\bf $s$-summable} in the uncountable Polish space $X$ iff for every subfamily $\cb\subseteq \ca$ $\bigcup \cb$ is $s$-measurable set.
\end{definition}

\begin{definition} Let $X$ be any uncountable Polish space and let us consider a cardinal $\kappa$. We say that the family $\ca\subseteq P(X)$ is {\bf $\kappa$-point family} iff for any $x\in X$ $|\{ A\in\ca:\ x\in A\}|<\kappa$. 
\end{definition}

\section{Results on nonmeasurability respect to $s_0$-ideal}

We start this section with the simple observations in the sequel Propositions.

\begin{proposition}\label{summable_family} Let $X$ be any uncountable Polish space and let $\c$ be regular. Then there exists a $s$-summable family $\ca\subseteq s_0$ s.t. $\bigcup \ca=X$.
\end{proposition}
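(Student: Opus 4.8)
The plan is to avoid a partition and instead build a \emph{nested} family, since for a family that is linearly ordered by inclusion every subfamily union collapses to an initial segment, which is easy to control. Concretely, I would fix an enumeration $X=\{x_\alpha:\alpha<\c\}$, set $A_\alpha=\{x_\beta:\beta<\alpha\}$ for $\alpha<\c$, and take $\ca=\{A_\alpha:\alpha<\c\}$.

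First I would check $\ca\subseteq s_0$ and $\bigcup\ca=X$. Since $\c$ is regular, every $\alpha<\c$ has $|\alpha|<\c$, so $|A_\alpha|<\c$; as $non(s_0)=\c$, each set of size $<\c$ lies in $s_0$, whence $A_\alpha\in s_0$. Trivially $\bigcup_{\alpha<\c}A_\alpha=\{x_\beta:\beta<\c\}=X$.

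The key point is that $\ca$ is a $\subseteq$-chain, so any subfamily union is again an initial segment. Given $\cb\subseteq\ca$, put $\gamma=\sup\{\alpha:A_\alpha\in\cb\}$; then $\bigcup\cb=\{x_\beta:\beta<\gamma\}$ when the index set of $\cb$ is bounded in $\c$, and $\bigcup\cb=X$ when it is cofinal in $\c$. In the bounded case $|\bigcup\cb|=|\gamma|<\c$, so $\bigcup\cb\in s_0$, and every $s_0$ set is $s$-measurable: the defining property of $s_0$ supplies, for each $P\in\mathrm{Perf}(X)$, a perfect $Q\subseteq P$ with $Q\cap\bigcup\cb=\emptyset$, which is the second disjunct. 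In the cofinal case $\bigcup\cb=X$, which is $s$-measurable because for each $P$ one may take $Q=P$, the first disjunct. Hence every subfamily union is $s$-measurable, i.e. $\ca$ is $s$-summable.

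The only real obstacle is selecting the correct structural shape for $\ca$: the naive candidate, a partition of $X$ into $\c$ pieces of size $<\c$, actually fails, since a transfinite diagonalization over $\mathrm{Perf}(X)$ lets one keep one meeting block and discard another at each perfect set, yielding a subfamily whose union is a Bernstein set and so is not $s$-measurable. The nested chain sidesteps this precisely because inclusion–comparability forces every subfamily union to be an initial segment, where the dichotomy ``size $<\c$ versus cofinal'' does all the work; here regularity of $\c$ is exactly what keeps proper initial segments of size $<\c$, hence inside $s_0$.
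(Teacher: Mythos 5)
Your construction is exactly the one in the paper: enumerate $X=\{x_\xi:\xi<\c\}$, take the nested initial segments $A_\alpha=\{x_\xi:\xi<\alpha\}$, and use that any subfamily union is either of size $<\c$ (hence in $s_0$, hence $s$-measurable) or all of $X$. The paper leaves the verification as a one-line remark, so your write-up is the same argument carried out in detail, and it is correct.
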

\begin{proof} Let us enumerate $X=\{ x_\xi:\; \xi<\c\}$ and let us define $A_\alpha=\{ x_\xi:\; \xi<\alpha\}\in s_0$ for any $\alpha<\c$. The family $\ca=\{ A_\alpha:\; \alpha<\c\}$ fulfills the assertion of the Theorem.
\end{proof}

\begin{fact} Every Luzin and Sierpiński set is in $s_0$.
\end{fact}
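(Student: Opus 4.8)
The plan is to verify the defining condition for $s_0$ directly, treating the Luzin and Sierpiński cases in parallel, since they differ only in replacing the meager ideal by the null ideal. Recall that a Luzin set meets every meager set in a countable set, while a Sierpiński set meets every (Borel) null set in a countable set. So I fix a Luzin set $L$ (resp. a Sierpiński set $S$) together with an arbitrary perfect set $P\in Perf(X)$, and I must produce a perfect $Q\subseteq P$ with $Q\cap L=\emptyset$ (resp. $Q\cap S=\emptyset$).

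First I would locate inside $P$ a perfect subset $P_0\subseteq P$ which is small in the appropriate sense: nowhere dense (hence meager) in the Luzin case, and of measure zero in the Sierpiński case. This is a standard Cantor-scheme construction: build a binary tree of nonempty relatively clopen pieces of $P$ with diameters tending to $0$, chosen thin enough (shrinking the basic pieces) that the resulting perfect branch set is nowhere dense, respectively has measure below any prescribed $\varepsilon$; the body of this tree is the desired perfect $P_0$.

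By the defining property of the Luzin (resp. Sierpiński) set, the intersection $L\cap P_0$ (resp. $S\cap P_0$) is then countable. It remains to observe that a perfect set with countably many points removed still contains a perfect set: indeed $P_0\setminus(L\cap P_0)$ is a $G_\delta$ subset of $P_0$ of cardinality $\c$, and every uncountable Borel set has the perfect set property, so it contains a perfect $Q$. This $Q\subseteq P$ satisfies $Q\cap L=\emptyset$ (resp. $Q\cap S=\emptyset$), which is exactly the condition placing $L$ (resp. $S$) in $s_0$.

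I expect no genuine obstacle here; the only points requiring care are the two classical lemmas invoked, namely that every perfect set contains a perfect meager, respectively null, subset, and that removing a countable set from a perfect set leaves a perfect set, both of which are routine in the Polish setting. The one mild subtlety is that the Sierpiński case presupposes a fixed Borel (for instance the natural product or Lebesgue) measure on $X$, with respect to which \emph{null}, and hence \emph{Sierpiński set}, are defined, so I would name that measure explicitly at the outset.
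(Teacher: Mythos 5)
Your proof is correct, and it is the standard argument for this classical fact; the paper itself states it as a Fact with no proof at all, so there is nothing to diverge from. All three ingredients you invoke --- that every perfect set contains a perfect nowhere dense (resp.\ null) subset, that a Luzin (resp.\ Sierpi\'nski) set meets such a set in a countable set, and that an uncountable Borel set contains a perfect set --- are used correctly, and your remark that the Sierpi\'nski case requires fixing a Borel measure on $X$ at the outset is a legitimate point of care that the paper glosses over.
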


\begin{proposition} Let $\c$ be regular and let us assume that $\ca\subset \{ A\in P(X):\; X\text{ is a Łuzin set}\}$ s.t. $\bigcup \ca \notin s_0$ and is $\c$-point family then there exists $\cb\subseteq\ca$ s.t. $\bigcup\cb$ is not $s$-measurable set.
\end{proposition}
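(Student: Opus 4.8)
The plan is to read the hypothesis as saying that $\ca$ is a family of Luzin sets (so that each $A\in\ca$ lies in $s_0$ by the Fact above), that $\bigcup\ca\notin s_0$, and that $\ca$ is a $\c$-point family, and to produce a single perfect set witnessing the failure of $s$-measurability of $\bigcup\cb$ for a suitably chosen $\cb\subseteq\ca$. Since $\bigcup\ca\notin s_0$, there is a perfect set $P^*$ such that every perfect $Q\subseteq P^*$ meets $\bigcup\ca$, and I will work entirely inside $P^*$. Every perfect set contains a nowhere dense (in $X$) perfect subset, so it suffices to arrange that both $\bigcup\cb$ and its complement meet every nowhere dense perfect $Q\subseteq P^*$: this yields the negation of $s$-measurability witnessed by $P=P^*$. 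Accordingly, I would fix an enumeration $\langle Q_\alpha:\alpha<\c\rangle$ of all nowhere dense perfect subsets of $P^*$ (there are exactly $\c$ of them) and build $\cb$ by transfinite recursion of length $\c$, maintaining alongside it a set $\cd\subseteq\ca$ of \emph{forbidden} members to be kept out of $\cb$.

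At stage $\alpha$ I would do two things. First, for the \emph{inside} requirement, note that $\bigcup\ca\cap Q_\alpha$ has size $\c$: otherwise it would be a set of size $<\c=non(s_0)$, hence in $s_0$, producing a perfect $Q\subseteq Q_\alpha\subseteq P^*$ disjoint from $\bigcup\ca$, contrary to the choice of $P^*$. Assuming inductively that the forbidden family satisfies $|\cd|<\c$, each forbidden set is Luzin and so meets the nowhere dense, hence meager, set $Q_\alpha$ in a countable set; thus $\bigcup\cd\cap Q_\alpha$ has size at most $|\cd|\cdot\aleph_0<\c$ by regularity of $\c$. Hence I may pick $x_\alpha\in(\bigcup\ca\cap Q_\alpha)\setminus\bigcup\cd$, fix some $A_\alpha\in\ca$ with $x_\alpha\in A_\alpha$ (necessarily $A_\alpha\notin\cd$), and put $A_\alpha$ into $\cb$. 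Second, for the \emph{outside} requirement, the family $\cb$ still has size $<\c$ and its members are Luzin, so the same count gives $\bigcup\cb\cap Q_\alpha$ of size $<\c$, and I may pick $y_\alpha\in Q_\alpha\setminus\bigcup\cb$; I then throw into $\cd$ every member of $\ca$ containing $y_\alpha$, of which there are $<\c$ by the $\c$-point assumption. Regularity of $\c$ keeps $|\cd|<\c$ at every stage, so the induction hypothesis is preserved.

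Finally I would verify the two requirements for $\cb=\{A_\alpha:\alpha<\c\}$. For each $\alpha$ we have $x_\alpha\in A_\alpha\subseteq\bigcup\cb$, so $\bigcup\cb$ meets $Q_\alpha$. For the complement, $y_\alpha\notin\bigcup\cb$: members added before stage $\alpha$ avoid $y_\alpha$ because $y_\alpha$ was chosen outside the then-current $\bigcup\cb$, while any $A_\beta$ with $\beta>\alpha$ satisfies $A_\beta\notin\cd$, whereas every set containing $y_\alpha$ was placed in $\cd$ at stage $\alpha$, so $y_\alpha\notin A_\beta$. Thus $y_\alpha\in Q_\alpha\setminus\bigcup\cb$, and both $\bigcup\cb$ and its complement meet every nowhere dense perfect subset of $P^*$, whence $\bigcup\cb$ is not $s$-measurable. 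The main obstacle, and the point where all three hypotheses act together, is the bookkeeping in the recursion: at each of the $\c$ stages I must guarantee that both $\bigcup\cb$ and $\bigcup\cd$ leave room inside $Q_\alpha$. Regularity of $\c$ keeps $|\cb|,|\cd|<\c$ cofinally, the Luzin property collapses each of their traces on the \emph{nowhere dense} set $Q_\alpha$ to a countable set, and the $\c$-point property bounds how many members must be forbidden to protect a single point $y_\alpha$; it is the simultaneous compatibility of the inside and outside commitments that requires the care.
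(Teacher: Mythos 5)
Your proof is correct and follows essentially the same strategy as the paper's: a transfinite recursion of length $\c$ over perfect sets, using regularity of $\c$ together with the Luzin property (countable trace on meager sets) to keep the partial union small on each $Q_\alpha$, and the $\c$-point hypothesis to control which members of $\ca$ must be avoided. The only cosmetic differences are that you work inside a perfect witness $P^*$ of $\bigcup\ca\notin s_0$ and restrict to nowhere dense perfect subsets, whereas the paper reduces to a meager perfect $P_0\subseteq\bigcup\ca$, and you bookkeep via an explicit forbidden family $\cd$ rather than requiring each newly chosen set to avoid the previously chosen points directly.
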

\begin{proof} First we show that $add(\ca,s_0)=\min \{ |\cb|:\; \cb\subseteq\ca\land \bigcup\cb\notin s_0 \}=~\c$. Let observe that for any perfect set $P\in Perf(X)$ there exists meager perfect subset of $P$. Then we can consider the family of the perfect sets which are meager only. Consider the subfamily $\cb\in [\ca]^{<\c}$ of the size less than $\c$. Observe that any member $A\in \cb$ has the countable intersection with the $P$ but we can decompose $P$ onto $\c$ many disjoint perfect sets $Q_\xi:\xi<\c$ then we can find $\c$ many $Q_\xi$ such that $Q_\xi\cap \bigcup\cb=\empty set$ what witness that $\bigcup\cb$ is in $s_0$.

We can assume that $\bigcup\ca$ is $s$-measurable, then one can find $P_0\in Perf(X)\cap\cm$ with $P_0\subseteq \bigcup\ca$. Now let us enumerate  $Perf(X\cap P_0)=\{ P_\xi:\xi<\c\}$. Then by transfinite induction we can build te following sequence:
$$
\< (A_\xi,d_\xi)\in\ca\times X:\;\; \xi<\c\>\text{ with the following properties:}
$$

\begin{itemize}
 \item $A_\xi\cap P_\xi\ne\emptyset$ and $d_\xi\in P_\xi$ for every $\xi<\c$ and
 \item $\{ d_\xi: \xi<\c\} \cap \bigcup\{ A_\xi:\; \xi<\c\}=\emptyset$.
\end{itemize}
In the $\alpha$ step of induction by the fact that $add(\ca,s_0)$ the union $\bigcup \{ A\in\ca:\; (\exists \xi<\alpha)\; d_\xi\in A\}$ is $s_0$ and then one can find an $A\in\ca$ such that $A\cap P_\alpha \ne\emptyset$ and $A\cup \{ d_\xi:\;\xi<\alpha\}=\emptyset$. After this, find $d\in P_\alpha\setminus (A\cup\bigcup\{ A_\xi:\xi<\alpha \})$ and put $A_\alpha=A$ and $d_\alpha=d$.
\end{proof}
\begin{remark} This is also true if we replace a Łuzin set by a Sierpiński set.
\end{remark}

Now we show that $\c$-point family assumption can not be committed.
\begin{proposition}[CH] There exists $s$-summable family $\ca$ of the Łuzin sets such that $\bigcup\ca=\bbr$.
\end{proposition}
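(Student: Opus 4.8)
The plan is to realize $\bbr$ as the union of an increasing, continuous $\omega_1$-chain of Łuzin sets, so that the only two possible ``shapes'' of a subfamily union are a single member of the chain (which lies in $s_0$, hence is $s$-measurable) or the whole line $\bbr$ (which is trivially $s$-measurable). First I would fix, using CH, a Łuzin set $L_0\subseteq\bbr$, i.e.\ an uncountable set meeting every meager set in a countable set; such a set exists by the classical construction against an $\omega_1$-enumeration of the meager $F_\sigma$ sets. I would also enumerate $\bbr=\{x_\xi:\;\xi<\omega_1\}$.

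Next I would define the chain by recursion: $L_{\xi+1}=L_\xi\cup\{x_\xi\}$ and $L_\lambda=\bigcup_{\xi<\lambda}L_\xi$ at limit $\lambda$, so that explicitly $L_\xi=L_0\cup\{x_\eta:\;\eta<\xi\}$ for every $\xi<\omega_1$. The chain is increasing and continuous by construction, and $\bigcup_{\xi<\omega_1}L_\xi=L_0\cup\bbr=\bbr$, so the family $\ca=\{L_\xi:\;\xi<\omega_1\}$ covers $\bbr$. The key verification is that each $L_\xi$ (for $\xi<\omega_1$) is again a Łuzin set: it is uncountable since it contains $L_0$, and for any meager $M$ we have $L_\xi\cap M\subseteq (L_0\cap M)\cup\{x_\eta:\;\eta<\xi\}$, which is countable because $L_0\cap M$ is countable and every ordinal $\xi<\omega_1$ is countable. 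Hence every member of $\ca$ is a Łuzin set, and by the Fact each lies in $s_0$, so each is $s$-measurable.

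Finally I would check $s$-summability. Given $S\subseteq\omega_1$, monotonicity of the chain gives $\bigcup_{\xi\in S}L_\xi=L_{\sup S}$ whenever $S$ is bounded (using maximality when $\sup S\in S$, and continuity at $\sup S$ when the supremum is a limit not attained by $S$); this is a single Łuzin set, hence in $s_0$ and $s$-measurable. If instead $S$ is cofinal in $\omega_1$, then $\bigcup_{\xi\in S}L_\xi=\bbr$, which is $s$-measurable trivially, since for every $P\in Perf(\bbr)$ one may take $Q=P\subseteq\bbr$. Thus every subfamily union is $s$-measurable and $\ca$ is $s$-summable.

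The only step requiring genuine care is the Łuzin-preservation argument, whose crux is the elementary observation that adjoining countably many points to a Łuzin set leaves it Łuzin; everything else is bookkeeping about the two possible forms of a subfamily union. Note also that the construction is essentially the Łuzin-set analogue of the chain used in Proposition~\ref{summable_family}, the difference being that here the members are uncountable (indeed Łuzin) rather than countable initial segments, which is why one must start from a genuine Łuzin set $L_0$ and grow it.
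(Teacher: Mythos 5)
Your proof is correct and follows essentially the same route as the paper: both build an increasing, continuous $\omega_1$-chain of Łuzin sets covering $\bbr$, so that every subfamily union is either a single chain member (a Łuzin set, hence in $s_0$ and $s$-measurable) or all of $\bbr$. The only cosmetic difference is that the paper forms $A_\xi=\bigcup\{C_\eta:\eta<\xi\}$ from auxiliary Łuzin sets $C_\eta\ni x_\eta$ (using that countable unions of Łuzin sets are Łuzin), whereas you adjoin the points $x_\eta$ directly to one fixed Łuzin set $L_0$.
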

\begin{proof} Let us enumerate all reals $x_\xi:\xi<\omega_1$ and let us consider family of $\omega_1$ many Łuzin sets $\{ C_\xi:\xi<\omega_1\}$ such that for every $\xi<\omega_1$ $x_\xi\in C_\xi$. Now let $A_\xi=\bigcup\{ C_\eta:\; \eta<\xi\}$ for every $\xi<\omega_1$ and let $\ca=\{ A_\xi:\xi<\omega_1\}$. Of course family $\ca$ consists of Łuzin sets and if $\eta<\xi<\omega_1$ then $A_\eta\subseteq A_\xi$ and $\bigcup\ca=\bbr$,
\end{proof}

It is well known that it is consistent with ZFC theory that $add(s_0)=\c=\omega_2$ see \cite{JMS} or \cite{Rep} for example. We give another way to show consistency of this fact with existence large cardinals.

\begin{theorem} Under existence a supercompact cardinal there exists a generic extension in which $\c=\omega_2$ and $cov_h(s_0)=\c$.
\end{theorem}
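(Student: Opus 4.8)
The plan is to reduce the statement to the single equality $add(s_0)=\c$ and then to force it with a large cardinal. First I would record the two elementary inequalities $add(s_0)\le cov_h(s_0)\le cov(s_0)$. For the left one, if $|\ca|<add(s_0)$ then $\bigcup\ca\in s_0$, and a member of the Marczewski ideal can contain no perfect set (for a perfect $P\subseteq\bigcup\ca$ the defining condition fails, since no perfect $Q\subseteq P$ is disjoint from $\bigcup\ca$); hence $\ca$ cannot witness $cov_h(s_0)$ and every witness has size at least $add(s_0)$. For the right one, any $\ca$ with $\bigcup\ca=X$ already has a perfect set below its union. Combining this with the chain $cov(s_0)\le cof(\c)$ from the excerpt and the regularity of $\c=\omega_2$ (so $cof(\c)=\c$), it suffices to produce a generic extension in which $\c=\omega_2$ and $add(s_0)=\c$: then $\c=add(s_0)\le cov_h(s_0)\le cov(s_0)\le\c$ pins $cov_h(s_0)$ to $\c$.

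To build the model I would start from an indestructibly supercompact $\kappa$ and combine two ingredients: a collapse bringing $\kappa$ down to $\omega_2$ and a proper, real-adding part forcing $\c=\omega_2$. The supercompactness is retained only in the guise of a strong reflection/guessing principle at $\omega_2$ — concretely a Laver-type function anticipating names for subsets of $\bbr$ — which survives into the final model and drives the bookkeeping. The real-adding part is a countable support iteration whose iterand at a stage handling a pair $(P,A)$, with $P$ perfect and $A\in s_0$, is an \emph{amoeba for $s_0$} generically adjoining a perfect $Q\subseteq P$ with $Q\cap A=\emptyset$; properness keeps $\omega_1$, and, arranged to have length $\omega_2$, the iteration yields $\c=\omega_2$.

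I would then verify $add(s_0)=\c$ in the extension. Given a family $\{A_\xi:\xi<\omega_1\}\subseteq s_0$ and a perfect set $P$, the guessing principle at $\omega_2$ lets me reflect the whole $\omega_1$-indexed family, together with $P$, to a single stage $\beta<\omega_2$; the amoeba generic taken just past $\beta$ then supplies a perfect $Q\subseteq P$ disjoint from $\bigcup_{\xi<\omega_1}A_\xi$, so the union lies in $s_0$ and $add(s_0)\ge\omega_2=\c$. The main obstacle — and the sole essential use of the large cardinal — is exactly this reflection step: an $s_0$-set is an arbitrary, non-definable subset of $\bbr$ of size up to $\c$, so it is captured by no single real at an intermediate stage, and only a supercompact-strength guessing sequence (with the attendant master conditions) can compress an $\omega_1$-family of such sets into one stage of the iteration. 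The remaining, more routine, points to discharge are the properness and $\omega_1$-preservation of the countable support iteration, the verification that the amoeba for $s_0$ is itself proper and genuinely produces a disjoint perfect set, and the absoluteness check that $s_0$-membership computed at the capturing stage agrees with that of the final model.
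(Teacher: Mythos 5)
Your opening reduction is sound: $add(s_0)\le cov_h(s_0)\le cov(s_0)\le\c$ holds for the reasons you give, so it would indeed suffice to force $\c=\omega_2$ together with $add(s_0)=\c$ (in fact this is more than the theorem needs: it is enough that no $\omega_1$-sized subfamily of $s_0$ covers a perfect set, for which a single point of $P\setminus\bigcup\ca$ suffices, rather than a perfect subset of $P$ missing $\bigcup\ca$). The genuine gap is in the step you yourself flag as the crux: ``reflect the whole $\omega_1$-indexed family, together with $P$, to a single stage $\beta<\omega_2$.'' This cannot be done, and no guessing principle of any strength repairs it. A set $A\in s_0$ of the final model is an arbitrary set of reals of size up to $\c=\omega_2$, hence is not an element of any intermediate model $V[G_\beta]$ with $\beta<\omega_2$; and capturing only its trace at stage $\beta$ is useless, because any perfect $Q\subseteq P$ fixed at stage $\beta$ acquires new branches at every later stage of a real-adding iteration, and the sets $A_\xi$ of the final model are free to contain exactly those new branches. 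So the disjointness $Q\cap\bigcup_{\xi}A_\xi=\emptyset$ is not decidable at any stage $\beta<\omega_2$, and the absoluteness check in your last sentence cannot be discharged. The supercompact is used in a structurally different way in the standard argument: one lifts $j:V\to M$ to $j:V[G_\kappa]\to M[j(G_\kappa)]$ and finds the relevant poset together with its $\omega_1$ dense sets appearing as the iterand at stage $\kappa$ of the longer iteration $j(\bbp_\kappa)$ --- that is, one proves that the forcing axiom itself holds in $V[G_\kappa]$ and then applies it \emph{internally}, where the family $\ca$ actually lives.

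That internal application is exactly the paper's proof, and it is much shorter than your plan: quote the consistency of PFA with $\c=\omega_2$ from a supercompact; then, working inside the PFA model, fix $\ca\in[s_0]^{\omega_1}$ and a perfect $P$, let $\bbp$ be the proper poset of perfect trees $p$ with $[p]\subseteq P$, and observe that each $D_A=\{p\in\bbp:\,[p]\cap A=\emptyset\}$ is dense precisely because $A\in s_0$. PFA gives a filter meeting all $\omega_1$ of the $D_A$, and a finite-intersection (fusion) argument produces a point of $P\setminus\bigcup\ca$; hence no $\omega_1$-family of $s_0$ sets covers any perfect set and $cov_h(s_0)=\omega_2=\c$. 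If you want to keep your stronger target $add(s_0)=\c$, the repair is the same in shape: apply the internal forcing axiom to an amoeba--Sacks-type poset whose generic is a perfect subtree of $P$ avoiding $\bigcup\ca$ (this still requires you to define that poset and prove it proper). In either case the essential correction is to apply the axiom inside the final model rather than to try to anticipate final-model families at intermediate stages of the iteration.
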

\begin{proof} For simplicity let assume that $X=\omega^\omega$ and let $P$, be any compact perfect subset of $X$. It is well known that under existence a supercompact cardinal there exists model in which PFA is holds (with $\c=\omega_2$). Let us consider a proper forcing $\bbp\subseteq \omega^{<\omega}$ which consists of all perfect trees s.t. $[p]\subseteq P$ for any $p\in\bbp$ with $\le=\subseteq$ relation. Now, let us choose any family $\ca\subseteq s_0$ of size $\omega_1$ and consider the family $\cd=\{ D_A:\; A\in\ca\}$ of dense subsets of $\bbp$ defined as follows
$$
(\forall A\in\ca)(\forall p\in \bbp)\; p\in D_A\iff p\cap A=\emptyset.
$$
Then by PFA there exists a $\cd$-generic set $G\subset\bbp$ (i.e. $G\cap D_A\ne\emptyset$ for every $A\in\ca$). Let us observe that for any finite family $G_0=\{ q_0,\ldots, q_{n-1}\}\in [G]^{<\omega}$ there exists condition $p\in G$ which is under any condition from $G_0$. Then the our generic family $G$ of forcing condition has a finite intersection property and then $\bigcap G\ne\emptyset$ which is a Sacks real in fact. Finally we see that $\bigcap G\subseteq P\setminus \bigcup\ca\ne\emptyset$.
\end{proof}

\begin{theorem}\label{pfa_result} If PFA holds and $\c$ is regular then for any $\c$-point family $\ca\subseteq s_0$ of the subsets of the Polish space $X$ with $\bigcup\ca\notin s_0$. Then there exists a subfamily $\cb\subseteq \ca$ s.t. $\bigcup\cb$ is not $s$-measurable set.
\end{theorem}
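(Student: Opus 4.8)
The plan is to run the same construction as in the Łuzin-set Proposition, but to feed it by the covering invariant $cov_h(s_0)=\c$ in place of the additivity input $add(\ca,s_0)=\c$, since under PFA it is $cov_h$ that is available. First I would record the tool: under PFA we have $\c=\omega_2$, and by the forcing argument of the preceding Theorem (apply PFA to the perfect-tree forcing $\bbp$ together with the $\omega_1$ many dense sets $D_A$, $A\in\ca$) no family of $\omega_1$ members of $s_0$ covers a perfect set; since $s_0$ is a $\sigma$-ideal the countable case is automatic, so every subfamily of $s_0$ of size $<\c$ fails to cover any $P\in Perf(X)$, i.e.\ $cov_h(s_0)=\c$. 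Next, if $\bigcup\ca$ is already not $s$-measurable we are finished with $\cb=\ca$, so I may assume $\bigcup\ca$ is $s$-measurable. As $\bigcup\ca\notin s_0$ there is a perfect set each perfect subset of which meets $\bigcup\ca$; applying $s$-measurability to it produces a perfect $P_0\subseteq\bigcup\ca$.

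Then I would enumerate $Perf(P_0)=\{P_\xi:\xi<\c\}$ and build by transfinite recursion a sequence $\<(A_\xi,d_\xi):\xi<\c\>$ with $A_\xi\in\ca$ and $d_\xi\in P_\xi$, keeping $A_\xi\cap P_\xi\ne\emptyset$, $A_\xi\cap\{d_\eta:\eta<\xi\}=\emptyset$, and $d_\xi\notin\bigcup\{A_\eta:\eta\le\xi\}$. At stage $\alpha$, the members of $\ca$ meeting $\{d_\eta:\eta<\alpha\}$ form the family $\bigcup_{\eta<\alpha}\{A\in\ca:\ d_\eta\in A\}$, which has size $<\c$ because $\ca$ is a $\c$-point family, $|\alpha|<\c$, and $\c$ is regular. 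Its union $B$ is therefore a union of fewer than $\c$ sets of $s_0$, so by $cov_h(s_0)=\c$ it does not cover $P_\alpha$; I pick $x\in P_\alpha\setminus B$, and since $P_\alpha\subseteq P_0\subseteq\bigcup\ca$ I choose $A_\alpha\in\ca$ with $x\in A_\alpha$. Because $x\notin B$, this $A_\alpha$ avoids every $d_\eta$ $(\eta<\alpha)$ while meeting $P_\alpha$ at $x$. Similarly $\bigcup\{A_\eta:\eta\le\alpha\}$ is a union of fewer than $\c$ sets of $s_0$, so it omits some point $d_\alpha\in P_\alpha$.

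Finally I set $\cb=\{A_\xi:\xi<\c\}$ and $D=\{d_\xi:\xi<\c\}$. Every perfect $Q\subseteq P_0$ equals some $P_\xi$, so $A_\xi\subseteq\bigcup\cb$ meets $Q$; thus $\bigcup\cb$ meets every perfect subset of $P_0$, and $d_\xi\in P_\xi=Q$ shows $D$ meets every perfect subset of $P_0$ as well. The bookkeeping yields $D\cap\bigcup\cb=\emptyset$: for $\eta\le\xi$ the requirement $d_\xi\notin\bigcup\{A_\zeta:\zeta\le\xi\}$ gives $d_\xi\notin A_\eta$, and for $\eta>\xi$ the requirement $A_\eta\cap\{d_\zeta:\zeta<\eta\}=\emptyset$ gives $d_\xi\notin A_\eta$. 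Hence for every perfect $Q\subseteq P_0$ we have $Q\cap\bigcup\cb\ne\emptyset$ and also $Q\not\subseteq\bigcup\cb$ (since $Q$ contains a point of $D$ disjoint from $\bigcup\cb$), so neither alternative in the definition of $s$-measurability holds at $P_0$, and $\bigcup\cb$ is not $s$-measurable.

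The step I expect to be the crux is noticing that each recursion stage needs only a \emph{single} point of $P_\alpha$ avoiding a union of fewer than $\c$ members of $s_0$, so that the weaker invariant $cov_h(s_0)=\c$ suffices and the stronger $add(s_0)=\c$ is not required; the $\c$-point hypothesis together with the regularity of $\c$ is exactly what keeps the forbidden subfamily of $\ca$ below size $\c$ at every stage, so that this covering bound always applies.
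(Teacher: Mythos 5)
Your proposal is correct and follows essentially the same route as the paper: assume $\bigcup\ca$ is $s$-measurable, extract a perfect $P_0\subseteq\bigcup\ca$, enumerate $Perf(P_0)$, and run a length-$\c$ recursion choosing $(A_\xi,d_\xi)$ so that $\bigcup\cb$ and $\{d_\xi:\xi<\c\}$ each meet every perfect subset of $P_0$ while remaining disjoint, the key input being $cov_h(s_0)=\c$ under PFA. You in fact supply details the paper leaves implicit --- the derivation of $cov_h(s_0)=\c$ from the preceding forcing argument, and the use of the $\c$-point hypothesis together with regularity of $\c$ to keep the forbidden subfamily of size $<\c$ at each stage --- but the construction is the same.
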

\begin{proof} We prove this Theorem by transfinite induction. We can assume that $\bigcup\ca$ is $s$-measurable, then one can find $P_0\in Perf(X)\cap\cm$ with $P_0\subseteq \bigcup\ca$. Now let us enumerate  $Perf(X\cap P_0)=\{ P_\xi:\xi<\c\}$. Then by transfinite induction we can build te following sequence:
$$
\< (A_\xi,d_\xi)\in\ca\times X:\;\; \xi<\c\>\text{ with the following properties:}
$$

\begin{itemize}
 \item $A_\xi\cap P_\xi\ne\emptyset$ and $d_\xi\in P_\xi$ for every $\xi<\c$ and
 \item $\{ d_\xi: \xi<\c\} \cap \bigcup\{ A_\xi:\; \xi<\c\}=\emptyset$.
\end{itemize}
In the $\alpha$ step of induction by the fact that $cov_h(s_0)=\c$ the union $\bigcup \{ A\in\ca:\; (\exists \xi<\alpha)\; d_\xi\in A\}$ doesn't cover $P_\alpha$ and then one can find an $A\in\ca$ such that $A\cap P_\alpha \ne\emptyset$ and $A\cup \{ d_\xi:\;\xi<\alpha\}=\emptyset$. After this, find $d\in P_\alpha\setminus (A\cup\bigcup\{ A_\xi:\xi<\alpha \})$ and put $A_\alpha=A$ and $d_\alpha=d$.

\end{proof}

The above Theorem uses the fact that $cov_h(s_0)=\c$. Here we show Theorem where we have $cov_h(s_0)<\c$.

\begin{theorem} It is relatively consistent that there exists a partition $\ca\in [s_0]^{\omega_1}$ of the real line for which there exists a subfamily $\cb\subseteq\ca$ such that $\bigcup\cb$ is not $s$-measurable and $\omega_1<\c$.
\end{theorem}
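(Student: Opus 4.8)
The plan is to separate a single set-theoretic hypothesis, the consistency of $cov(s_0)=\omega_1<\c$, from the construction of the partition, which will then be carried out in ZFC from that hypothesis. First note that $s_0$ is a $\sigma$-ideal with $\bbr\notin s_0$, so no countable family of $s_0$ sets covers $\bbr$ and therefore $cov(s_0)\ge\omega_1$ always; consequently $cov(s_0)\le\omega_1<\c$ already forces $cov(s_0)=\omega_1<\c$. I claim this hypothesis alone produces the required object, so the theorem reduces to exhibiting a model in which it holds.

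So work in a model of $cov(s_0)=\omega_1<\c$ and fix a cover $\bbr=\bigcup_{\xi<\omega_1}B_\xi$ with every $B_\xi\in s_0$. Independently, and in ZFC, split $\bbr$ into two Bernstein pieces: enumerate $Perf(\bbr)=\{P_\alpha:\alpha<\c\}$ and by recursion pick distinct $u_\alpha,v_\alpha\in P_\alpha$ avoiding the $<\c$ many points already used, declaring $u_\alpha\in U$ and $v_\alpha\in V$ (unused reals go to $U$); then $U\cap V=\emptyset$, $U\cup V=\bbr$, and both $U$ and $V$ meet every perfect set. Now intersect the cover with the two halves and disjointify, setting for $\xi<\omega_1$
$$
A^U_\xi=(B_\xi\cap U)\setminus\bigcup_{\eta<\xi}B_\eta,\qquad A^V_\xi=(B_\xi\cap V)\setminus\bigcup_{\eta<\xi}B_\eta .
$$
As $s_0$ is closed under subsets, each $A^U_\xi$ and $A^V_\xi$ lies in $s_0$, and the standard disjointification argument shows $\{A^U_\xi:\xi<\omega_1\}$ partitions $U$ while $\{A^V_\xi:\xi<\omega_1\}$ partitions $V$. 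Hence $\ca=\{A^U_\xi:\xi<\omega_1\}\cup\{A^V_\xi:\xi<\omega_1\}$ is a partition of $\bbr$ into $s_0$ sets; discarding empty pieces and, if fewer than $\omega_1$ remain, splitting one piece of size $\c$ into $\omega_1$ subsets (still in $s_0$, and on the correct side of $U,V$) we may assume $\ca\in[s_0]^{\omega_1}$. Taking $\cb=\{A^U_\xi:\xi<\omega_1\}$ gives $\bigcup\cb=U$; since $U$ and its complement $V$ both meet every perfect set, no perfect set has a perfect subset contained in or disjoint from $U$, so $U$ is completely $s$-nonmeasurable, in particular not $s$-measurable.

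The remaining and genuinely hard step is to produce a model of $cov(s_0)=\omega_1<\c$, and this is where I expect the real work to lie. The natural candidate is the Sacks model, i.e. the countable support iteration of Sacks forcing of length $\omega_2$ over a model of CH, in which $\c=\omega_2$ and $non(s_0)=\c$ as in the displayed chain. To get $cov(s_0)=\omega_1$ one must exhibit $\omega_1$ many $s_0$ sets covering all reals of the final model; I would try to build these along the iteration and prove by a fusion/preservation argument that Sacks forcing does not destroy such a cover (equivalently, that the generic Sacks real is absorbed into a fixed $\omega_1$-indexed family of $s_0$ sets). Verifying this preservation is the main obstacle; granting it, the construction of the previous paragraph yields a partition $\ca\in[s_0]^{\omega_1}$ of $\bbr$ together with a subfamily $\cb\subseteq\ca$ for which $\bigcup\cb$ is not $s$-measurable while $\omega_1<\c$, which is precisely the assertion.
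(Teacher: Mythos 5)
Your reduction and the ZFC construction from the hypothesis $cov(s_0)=\omega_1<\c$ are both correct: disjointifying an $\omega_1$-cover of $\bbr$ by $s_0$ sets against a Bernstein partition $U\cup V$ does yield a partition $\ca\in[s_0]^{\omega_1}$ of the line with $\bigcup\cb=U$ completely $s$-nonmeasurable (the padding step is not even needed: since no subfamily $\{B_\eta:\eta<\xi\}$ with $\xi<\omega_1=cov(s_0)$ covers $\bbr$, cofinally many of the disjointified pieces are nonempty, so exactly $\omega_1$ survive). This is a genuinely different route from the paper, which forces with a finite support Cohen iteration of length $\aleph_{\omega_1}$ and takes as its partition the successive layers $(V_{\alpha+1}\setminus V_\alpha)\cap\bbr$, proving nonmeasurability of the union of the odd layers directly from a claim about perfect subtrees consisting of new reals; note that the paper's family has size $\aleph_{\omega_1}=\c$ rather than $\omega_1$, so your construction actually delivers the cardinality asserted in the statement.

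The genuine gap is the one you flag yourself: the consistency of $cov(s_0)=\omega_1<\c$ is the entire set-theoretic content of the theorem, and you do not establish it. Worse, the candidate you reach for is the wrong one: in the countable support Sacks iteration $cof(\c)=\omega_2$, so the only general upper bound, $cov(s_0)\le cof(\c)$, gives nothing, and showing that a fixed $\omega_1$-indexed family of $s_0$ sets absorbs all $\omega_2$ Sacks generics is not a routine fusion argument --- whether $cov(s_0)=\omega_1$ holds in that model at all is a nontrivial question, and this "preservation" is precisely the missing mathematics, not a verification to be granted. Two ways to close the gap are already available in the paper: (i) cite Yorioka's theorem (referenced there for exactly this purpose) that a length-$\omega_2$ finite support Hechler iteration over a model of CH forces $cov(s_0)=\omega_1<\c=\omega_2$; or (ii) combine $non(s_0)=\c$ with the inequality $cov(s_0)\le cof(\c)$ quoted in Section~1 and simply force $\c$ to be singular of cofinality $\omega_1$, e.g.\ by adding $\aleph_{\omega_1}$ Cohen reals --- the very model the paper uses. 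With either input your argument becomes a complete and arguably cleaner proof of the stated theorem.
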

\begin{proof} Let us start from ground model $V$ in which $GCH$ is hold. Let us iterate with finite support of the Cohen forcing $C$, $\aleph_{\omega_1}$ many times and denote this forcing notion as $\bbp$. Of course or forcing notion is c.c.c. then every new real appears in the middle of 
iteration. Now, consider the following family $\{ V_\beta\cap \bbr^{V^\bbp}:\beta<\aleph_{\omega_1}\}$ of the real line $\bbr^{V^\bbp}$ where $V_0=V$. 
For the reader convenience we prove the following well known Claim.

\begin{claim}\label{new_branch} For any forcing notion $\bbp$ which adds a new real $c$ and any perfect tree $T\in \cp(2^{<\omega})\cap V$ in the ground model $V$ there exists a perfect subtree $S_c\subseteq T$ such that every branch of $S_c$ is a new i.e. $[S_c]\subseteq V^\bbp\setminus V$.
\end{claim}
\begin{proof} Let $T\subseteq 2^{<\omega}$ be any perfect tree in the ground model $V$. Let $c\in 2^\omega$ be any new real added by forcing $\bbp$. For any $n\in\omega$ let us define $Split_n(T)$ as follows:
$$
\{ s\in T:s^\smallfrown 0,s^\smallfrown 1\in T\land (\exists t\subseteq s)t\in Split_{n-1}(T)\land |s|\text{ is smallest}\}.
$$
Now let $S_c$ consists only those nodes of $T$ such that if $s\in Split_n(T)$ and $n=2k+1$ is odd then $s^\smallfrown i\in S$ iff $i=c(k)$. Let $x\in[S_c]$ be any branch of $S_c$. The tree $S_c$ is a perfect tree of course. We show that $x\in 2^\omega\cap (V^\bbp\setminus V)$. If not then let us consider the sequence such that for every $k\in\omega$
$$
y(k)=i\iff (\exists n\in\omega )\;\;x\upharpoonright n\in Split_{2k+1}(T)\land i=x(k)
$$
which is a set in the ground model but $y=c$ which is impossible because $c\notin V$, contradiction. 
\end{proof}

In each stage of the iteration (let say $\alpha$) the new real is added (Cohen real for example) $c_\alpha\in V\cap\bbr$ and then by above Claim \ref{new_branch} in every perfect tree $T$ from $V_\alpha$ we can find a perfect subtree $S\subseteq T$ for which every branch $b\in [S]$ is a new real $b\in V_{\alpha+1}\setminus V_\alpha$. This argument shows that $\ca=\{ \bbr^V\}\cup \{ (V_{\alpha+1}\setminus V_\alpha)\cap\bbr^{V^\bbp}:\alpha<\aleph_{\omega_1}\}\subset s_0$ forms a partition of the real line in the generic extension. Now let us consider the following family:
$$
\cb=\{ (V_{\alpha+1}\setminus V_\alpha)\cap\bbr^{V^\bbp}: \alpha<\aleph_{\omega_1}\land \alpha \text{ is odd }\}.
$$
To finish the proof we show that $A=\bigcup\cb$ is not $s$-measurable. To do this we show the little stronger statement that for every perfect tree $T$ and any perfect subtree $S\subseteq T$ such that $A\cap [S]\ne\emptyset$ and $A^c\cap [S]\ne\emptyset$. By c.c.c. of $\bbp$ every $S\in V_\alpha$ appears in a some $\alpha<\aleph_{\omega_1}$ stage of the iteration. Then we can find a perfect subtrees $S_2\subseteq S_1\subseteq S$ for which which $S_1\in V_{\alpha+1}$, $[S_1]\cap \bbr^{V^\bbp}\cap V_\alpha=\emptyset$ but $[S_1]\cap\bbr^{V^\bbp}\cap V_{\alpha+1}\ne\emptyset$ and $S_2\in V_{\alpha+2}$, $[S_2]\cap\bbr^{V^\bbp}\cap V_{\alpha+1}=\emptyset$ but $[S_2]\cap\bbr^{V^\bbp} \cap V_{\alpha+2}\ne\emptyset$.

The standard argument shows that $\c=\aleph_{\omega_1}^V=\aleph_{\omega_1}^{V^\bbp}$. The proof is finished.
\end{proof}

\section{Big point $s_0$-families and their $s$-nonmeasurability}
In contrast with the previous section where we proved the consistency result in Theorem \ref{pfa_result} which deals the families with small point property, we consider the big-point families of sets $\ca$ from ideal $s_0$ in the following meaning:

If $X$ be any Polish space then any family $\ca\subset\cp(X)$ is a {\bf big-point family} iff
$$
\{ x\in X:\;\; |\{ A\in\ca: x\in A|\}<\c\}\in s_0.
$$
The family constructed in the proof of the Proposition \ref{summable_family} is a big-point family. But in some additional assumption we can prove the following Theorem.
\begin{theorem}\label{big-point} If $cov(s_0)=\c$ and $\c$ is regular cardinal then if $\ca\subseteq s_0$ is a big-point family of Marczewski null subsets of real line such that
$$
(\forall x,y\in \bbr)\; x\ne y\then |\{ A\in\ca:\; x,y\in A\}|<\c.
$$
Then there exists a subfamily $\ca'\subseteq \ca$ such that $\bigcup\ca'$ is completely $s$-nonmeasurable.
\end{theorem}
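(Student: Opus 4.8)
The plan is to carry out a transfinite recursion of length $\c$ in the spirit of Theorem \ref{pfa_result}, enumerating all perfect sets as $Perf(\bbr)=\{P_\xi:\xi<\c\}$ and building simultaneously a sequence $\<(A_\xi,d_\xi):\xi<\c\>$ in $\ca\times\bbr$ so that, writing $D_\alpha=\{d_\xi:\xi<\alpha\}$, at every stage $\alpha<\c$ one has (i) $A_\alpha\cap P_\alpha\ne\emptyset$ and $A_\alpha\cap D_\alpha=\emptyset$, and (ii) $d_\alpha\in P_\alpha$ with $d_\alpha\notin\bigcup\{A_\xi:\xi\le\alpha\}$. I then set $\ca'=\{A_\alpha:\alpha<\c\}$. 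Condition (i) forces $\bigcup\ca'\cap P_\alpha\ne\emptyset$ for every $\alpha$, while condition (ii) guarantees $d_\alpha\notin\bigcup\ca'$: indeed $A_\beta$ misses $d_\alpha$ for $\beta>\alpha$ because $d_\alpha\in D_\beta$, and for $\beta\le\alpha$ because $d_\alpha\notin\bigcup\{A_\xi:\xi\le\alpha\}$. Hence every perfect $P_\alpha$ meets both $\bigcup\ca'$ and its complement, which is exactly complete $s$-nonmeasurability of $\bigcup\ca'$.

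The first recursion step, choosing $A_\alpha$, is where the big-point and two-point hypotheses are used. Since the big-point assumption says $B=\{x:|\{A\in\ca:x\in A\}|<\c\}\in s_0$, the definition of $s_0$ provides a perfect $Q\subseteq P_\alpha$ with $Q\cap B=\emptyset$; as $|D_\alpha|<\c=|Q|$ I may fix $p\in Q\setminus D_\alpha$. Then $|\{A\in\ca:p\in A\}|=\c$, whereas for each $d\in D_\alpha$ the two-point hypothesis gives $|\{A\in\ca:p,d\in A\}|<\c$. Because $\c$ is regular and $|D_\alpha|<\c$, the union $\bigcup_{d\in D_\alpha}\{A\in\ca:p,d\in A\}$ still has size $<\c$, so I can pick $A_\alpha\in\{A\in\ca:p\in A\}$ avoiding it. This $A_\alpha$ contains $p\in P_\alpha$, so it meets $P_\alpha$, and it contains no point of $D_\alpha$, which is (i).

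The second step, choosing $d_\alpha$, is the one I expect to be the main obstacle. I need a point of $P_\alpha$ lying outside $\bigcup\{A_\xi:\xi\le\alpha\}$, that is, I must know that fewer than $\c$ many members of $s_0$ cannot cover the perfect set $P_\alpha$. Plain $cov(s_0)=\c$ only prevents covering all of $\bbr$, so a leftover point need not land inside $P_\alpha$; what is really needed is $cov_h(s_0)=\c$. The delicate passage is therefore $cov(s_0)=\c\Rightarrow cov_h(s_0)=\c$. For the Marczewski ideal this is available: restricting to a perfect $P\subseteq\bbr$ one checks $s_0(\bbr)\restricted P=s_0(P)$ (a closed subset of $P$ is countable or contains a perfect set, which makes both inclusions routine), and the covering number of $s_0$ is the same in every perfect Polish space, so a cover of some perfect set by $<\c$ sets would give $cov(s_0)<\c$, a contradiction. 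Granting this, $P_\alpha\not\subseteq\bigcup\{A_\xi:\xi\le\alpha\}$ and I fix $d_\alpha$ in the difference, establishing (ii) and completing the recursion.

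Finally, regularity of $\c$ enters only to ensure that a union of fewer than $\c$ sets, each of size less than $\c$, again has size less than $\c$, which is what keeps the discarded family in the choice of $A_\alpha$ from exhausting $\{A\in\ca:p\in A\}$. Note that, in contrast with the earlier propositions, no assumption that $\bigcup\ca$ be $s$-measurable is invoked: the recursion produces a completely $s$-nonmeasurable union outright. The one genuinely delicate ingredient remains the identification $cov_h(s_0)=cov(s_0)$, which lets the single hypothesis $cov(s_0)=\c$ drive the reservation of each $d_\alpha$ inside $P_\alpha$.
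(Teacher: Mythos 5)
Your proposal is correct and follows essentially the same transfinite recursion as the paper: enumerate $Perf(\bbr)$, at stage $\alpha$ use the big-point and two-point hypotheses together with regularity of $\c$ to pick $A_\alpha$ meeting $P_\alpha$ and missing $\{d_\xi:\xi<\alpha\}$, then use the covering hypothesis to reserve $d_\alpha\in P_\alpha$ outside $\bigcup\{A_\xi:\xi\le\alpha\}$. You are in fact more careful than the paper at the two delicate points --- explicitly choosing the witness $p$ inside a perfect subset of $P_\alpha$ avoiding the $s_0$ set of small-multiplicity points, and justifying that $cov(s_0)=\c$ yields $cov_h(s_0)=\c$ so that $<\c$ many $s_0$ sets cannot cover a perfect set --- both of which the paper's proof uses silently.
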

\begin{proof} We prove this Theorem using transfinite induction. Then let us enumerate the set of all perfect subsets $Perf=\{ P_\xi:\xi<\c\}$ of the fixed uncountable Polish space $X$. We will build recursively a sequence of the length $\c$, 
$
\{(A_\xi,d_\xi)\in \ca\times P_\xi:\xi<\c\}
$
such that for every $\xi<\c$ we have
\begin{itemize}
    \item $A_\xi\cap P_\xi\ne\emptyset$ and
    \item $\{ d_\eta:\eta<\xi\} \cap \bigcup\{ A_\eta:\eta<\xi\}=\emptyset$.
\end{itemize}
Assume that we have sequence with the above properties of the fixed length $\xi<\c$. Choose any point $x\in P_\xi\setminus \bigcup_{\eta<\xi}\{ d_\eta\}$ then by assumption in our Theorem there exists a some $A\in\ca$ such that $x\in X$ and $d_\eta\notin A$ for every $\eta<\xi$. Now by an assumption that $cov(s_0)=\c$ choose any $d\in P\setminus (\bigcup\{ A_\eta:\eta<\xi\}\cup A)$. Finally set $A_\xi=A$ and $d_\xi=d$. Then $\{(A_\eta,d_\eta)\in \ca\times P_\eta:\eta\le\xi\}$ fulfills the analogous bullets as above. Then by transfinite induction Theorem we can construct a sequence with the length of $\c$ with an above properties. These properties shows that a family $\ca'=\{ A_\xi:\xi<\c\}$ fulfills an assertion of this Theorem.
\end{proof}

Let observe that the assertion of the above Theorem is true if PFA is hold.

\section{MAD $s_0$-families and their $s$-nonmeasurability}

We start this section with the definition of {\bf a.d.}-family i.e any family of sets $\ca\subseteq [\omega]^\omega$ is {\bf a.d.}-family on $\omega$ if
$$
(\forall a,b\in\ca)\; a\ne b\then a\cap b\in [\omega]^{<\omega}.
$$
The two reals $f,g\in\omega^\omega$ in Baire space are eventually different {\bf e.d.} iff $f\cap g$ is finite subset of $\omega\times\omega$. Then let us observe that {\bf e.d.} family $\ca\subseteq\omega^\omega$ is an {\bf a.d.} family on $\omega\times\omega$. For this reason we will call the eventually different family as almost disjoint {\bf a.d.} family also. Maximal almost adjoint (or eventually different) families respect inclusion are called a {\bf m.a.d.} families.

We know that union over every {\bf a.d.} family or eventually different functions family is a meager in the Cantor and Baire space. But it is well known that it is provable in ZFC that there exists a m.a.d family which contain a some uncountable perfect set which makes this set $s_0$ positive one. It is a natural question that it is true that there exists a m.a.d. family which union forms $s$-nonmeasurable set. From the other side it is well known that consistent is an existence a m.a.d. family $\ca$ with the cardinality is less than $\c$ see \cite{Kunen} book for example. Moreover, $non(s_0)=\c$ then the union of such a family $\bigcup\ca$ is in $s_0$.

\subsection{Consistency of $s$-nonmeasurable m.a.d. family}
We show the consistency of the existing a $m.a.d.$ family $\ca$ such that $\bigcup \ca$ is $s$-nonmeasurable in the
Baire space for example. We have the following Theorem.

\begin{theorem}\label{main_mad} It is relatively consistent with ZFC theory that There exists a {\bf m.a.d.} family of functions $\ca\subseteq \omega^\omega$ such that $\bigcup \ca$ is not $s$-measurable.
\end{theorem}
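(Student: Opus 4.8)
The plan is to build the family by transfinite recursion under $CH$, which is the natural setting: since $non(s_0)=\c$, every set of size $<\c$ lies in $s_0$ and is therefore $s$-measurable, so an $s$-nonmeasurable $\ca$ must have size $\c$; under $CH$ we have $\c=\omega_1$ and can enumerate the relevant objects in length $\omega_1$. A single perfect set will witness non-$s$-measurability. First I would fix, in $ZFC$, a perfect set $P\subseteq\omega^\omega$ all of whose members are pairwise eventually different (a perfect a.d. family); such $P$ exists, and by Mathias' theorem it is never maximal, so there are functions $f$ with $f\cap p$ finite for every $p\in P$. The goal is to arrange that $\ca$ meets $P$ in a Bernstein-like fashion: every perfect $Q\subseteq P$ contains a member of $\ca$ and a member of $\ca^c$. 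Then no perfect $Q\subseteq P$ can satisfy $Q\subseteq\ca$ or $Q\subseteq\ca^c$, so $P$ witnesses that $\bigcup\ca$ is not $s$-measurable.

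Next I would run the recursion. Under $CH$ enumerate the perfect subsets of $P$ as $\{Q_\xi:\xi<\omega_1\}$ and all functions as $\{g_\xi:\xi<\omega_1\}$, and construct $\ca$ as the union of a chain of countable eventually different families. At stage $\xi$ I pick two points $a_\xi,b_\xi\in Q_\xi$ not yet decided (possible since only countably many points have been decided so far while $|Q_\xi|=\c$), put $a_\xi$ into $\ca$, and forbid $b_\xi$ from ever entering $\ca$. To keep the family eventually different it is essential that $a_\xi$ be eventually different from all current members. Maximality is handled by also processing $g_\xi$: if $g_\xi$ is still eventually different from every current member, I adjoin a further function colliding with $g_\xi$; at the end every function will collide with some member, so $\ca$ is m.a.d.

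The crux is a killer lemma reconciling maximality with the exclusion of the $b_\xi$. Since the excluded $b_\xi$ lies in $P$ and all of $P$ is pairwise eventually different, no member coming from $P$ can collide with $b_\xi$; maximality therefore demands an external member $k_\xi$ with $k_\xi\cap b_\xi$ infinite. I would manufacture $k_\xi$ by grafting: choose $k_\xi^0$ eventually different from every element of $P$ and from all previously chosen killers, fix an infinite $S_\xi\subseteq\omega$, and set $k_\xi=b_\xi$ on $S_\xi$ and $k_\xi=k_\xi^0$ off $S_\xi$. Because any $p\in P\setminus\{b_\xi\}$ is eventually different from $b_\xi$ (on $S_\xi$) and from $k_\xi^0$ (off $S_\xi$), the graft $k_\xi$ is eventually different from every element of $P$ except its target $b_\xi$. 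This is exactly what keeps inclusion possible: every point of $P$ not yet chosen as some target $b_\eta$ remains eventually different from all killers, and from all selected $P$-points by almost disjointness of $P$, hence stays available to play the role of some later $a_\xi$.

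The main obstacle is organising the killers globally. Maximality forces one killer for each of the $\c$-many excluded points, and the functions $\{k_\xi^0\}$ must form an almost disjoint family that extends $P$ and is pairwise eventually different, all while dodging the collisions grafted onto the targets and never colliding with the points reserved for inclusion; securing all of these requirements simultaneously is where the non-maximality of $P$ and the $\omega_1$-length bookkeeping of $CH$ are used. Once the recursion closes, every function collides with a member of $\ca$ (maximality), while inside $P$ both $\ca$ and $\ca^c$ meet every perfect subset; hence $P$ witnesses that $\bigcup\ca$ is not $s$-measurable, and the whole construction being carried out under $CH$ yields the stated relative consistency.
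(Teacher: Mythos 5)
Your route is genuinely different from the paper's: the paper obtains the model by a finite-support iteration of a tailor-made c.c.c.\ forcing built over a perfect a.d.\ tree $T$, in which the generic real and the ``good'' generic branches of $T$ are put into the family while every perfect subtree of $T$ is forced to contain a ``bad'' branch colliding with the generic real (hence barred from any a.d.\ extension); this produces the witness in models where $\c$ may exceed $\omega_1$. A direct transfinite construction under CH would also suffice for the stated consistency, so your plan is admissible in principle, but as written it has a genuine gap exactly at the point you yourself flag as ``the main obstacle.'' The recursion needs, at every stage $\zeta$, a point $a_\zeta\in Q_\zeta$ that is eventually different from \emph{all} current members of the family, not merely a point ``not yet decided''; since the killers are not elements of the e.d.\ set $P$, nothing in your argument bounds the set $\{p\in Q_\zeta:\ |p\cap h|=\omega\}$ for a killer $h$. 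Your grafting lemma controls this only for the $b_\xi$-killers, whose target is a single point of $P$. For the maximality killers of the $g_\xi$ you give no construction at all, and a careless choice can collide with uncountably many points of $P$: the traces $\{n:\ g_\xi(n)=p(n)\}$, $p\in P$, form an a.d.\ family on $\omega$ which may have size $\c$. What is needed, and missing, is a lemma of the form: if $g$ is e.d.\ from every current member, then there is $h$ with $|h\cap g|=\omega$ which is e.d.\ from every current member and from all but countably many (in fact all but at most one) elements of $P$ --- obtainable, e.g., by grafting $g$ on an infinite set contained in a single trace $\{n:\ g(n)=p_0(n)\}$, or on a set where $g$ takes values outside the range used by $P$. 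Without this the induction can die: some later $Q_\zeta$ may contain no admissible $a_\zeta$, and $P$ then fails to witness non-$s$-measurability.

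A secondary but real error: Mathias' theorem concerns almost disjoint families of subsets of $\omega$, not eventually different families of functions, and for the latter Borel \emph{maximal} eventually different families do exist in ZFC (Horowitz--Shelah), so ``a perfect e.d.\ set is never maximal'' is unjustified as stated. This is repairable --- build $P$ with all values even, so that odd-valued functions supply the external reservoir of functions e.d.\ from every element of $P$ --- but the repair must be made explicit, since your entire supply of killers $k^0_\xi$ depends on it.
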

\begin{proof} Let us consider the ground model $V$ of $GCH$. To do we first choose any perfect tree $T\subseteq
\omega^{<\omega}$ in $V$ such that $[T]$ forms an $a.d.$ family. Now, let us define a forcing notion $(Q,\le)$ as
follows:
$p=(x_p,s_p^g,s_p^b,\cf_p,\ch_p)\in Q$ iff
\begin{itemize}
 \item $x_p\in \omega^{<\omega}$ and 
 \item $s_p^g,s_p^b\in [\omega^{|x_p|}]^{<\omega}$ and 
 \item $\cf_p\in [ST]^{<\omega}$ and
 \item $\ch_p\in [\omega^\omega]^{<\omega}$,
\end{itemize}
and fulfills the following conditions:
\begin{enumerate}
    \item $|s_p^g|=|s_p^b|=|\cf_p|$ and
    \item if $\cf=\{ T_k:k\in n\}\land s_p^g=\{ s_k: k\in n\} \then s_k\in T_k$ for every $k\in n$,
    \item if $\cf=\{ T_k:k\in n\} \land s_p^b=\{ s_k: k\in n\} \then s_k\in T_k$ for every $k\in n$.
    \item $(\forall S\in \cf) s_p^g\cap S\ne\emptyset\land s_p^b\cap S\ne\emptyset$
\end{enumerate}
Here $ST$ stands for the family of all perfect subtrees of the tree $T$.

The order is defined as follows: for every $p=(x_p,s_p^g,s_p^b,\cf_p),q=(x_q,s_q^g,s_q^b,\cf_q)\in Q$ we have $p\le q$ iff
\begin{enumerate}
    \item $x_q\subset x_p \land \cf_q\subseteq \cf_p \land \ch_q\subseteq \ch_p$ and
    \item $(\forall s\in s_q^g)(\exists t\in s_p^g)(s\subseteq t)$ and
    \item $(\forall s\in s_q^b)(\exists t\in s_p^b)(s\subseteq t)$ and
    \item $(\forall s\in s_q^g)(\forall t\in s_p^g)(s\subseteq t\then x_p\cap s=x_p\cap t)$ and
    \item $(\forall h\in \ch_q)(x_p\cap h=x_q\cap h)$ and
    \item $(\forall h\in\ch_q)(\forall s\in s_q^g)(\forall t\in s_p^g)(s\subseteq t\then s\cap h=t\cap h)$.
\end{enumerate}

From the definition of the our forcing notion we have the following Claims.
\begin{claim}\label{ccc_mad} $Q$ is c.c.c.
\end{claim}
\begin{proof} The proof goes in traditional way. First choose any two conditions $p,q\in Q$ with $x_p= x_q=x, s_p^g=s_q^g=s^g,s_p^b=s_q^b=s^b$.  The following forcing condition $r=(x,s^g,s^b,\cf_p\cup \cf_q,\ch_p\cup \ch_q)\in Q$  is a common extension (has more information) than $p$ and $q$.
Now consider the uncountable set $\cw\in [Q]^{\omega_1}$ of forcing conditions of the poset $Q$. Then there exists an uncountable subset $\cw_0\in [\cw]^{\omega_1}$ s.t. each member of $\cw_0$ has the same first coordinate. Then there exists uncountable $\cw_1\in[\cw_0]^{\omega_1}$ with the same the second coordinate and then we can find $\cw_2\in [\cw_1]^{\omega_1}$ which is also uncountable and the third coordinate is the same for all conditions from $\cw_2$. Then by above remark all conditions in $\cw_2$ are comparable.
\end{proof}

\begin{claim}\label{s^g_mad} Let $G\subseteq Q$ generic filter over $V$. Then in $V[G]$ the family $\{ x_G\}\cup [\{ s: (\exists p\in G)(\exists t\in s_p^g) s\subseteq t \}]$ is {\bf a.d.}, where $x_G=\bigcup\{ x_p: p\in G \}$ is a generic real.
\end{claim}
\begin{proof} Choose any element $y$ belongs to the set $[\bigcup s_p^g:\; p\in G]$ then for every $n\in\omega$ there exists a condition $p\in G$ and $s\in s_p^g$ such that $y\restriction_n\subseteq s$. Then we can find decreasing sequence $(p_n)_{n\in\omega}\in G^\omega$such that for every $n\in\omega$ $y\restriction_n\subseteq s$ for some $s\in s_{p_n}^g$ (here $p_n=(x_p,s_p^g,s_p^b,\cf_p)\in G$). Then we have $x_{p_n}\cap s_{p_n}=x_{p_n}\cap s_{p_0}$ and then
$$
x_{p_n}\cap y\restriction_n\subseteq x_{p_n}\cap s_{p_n}=x_{p_n}\cap S_{p_0}\subseteq s_{p_0}.
$$
Then finally $x_G\cap y\subseteq s_{p_0}$. From the other side for each condition $p\in Q$ we have $s_p^g\subseteq T$ but the formula "tree has {\bf a.d.} branches only" i.e.
$$
(\forall x)(\forall y)(\forall n\in\omega) (x\ne y\land x\restricted_n\in T\land y\restricted_n\in T)\then (x\cap y\in T)
$$
is $\prod_1^1$ formula then is absolute between transitive ZF models of the set theory. Then $\{ x_G\}\cup [\{ s: (\exists p\in G)(\exists t\in s_p^g) s\subseteq t \}]$ forms {\bf a.d.} family.
\end{proof}

For a reader convenience we give the proof of the next Claim.
\begin{claim}\label{s^b_mad} Let $G\subseteq Q$ generic filter over $V$. Then in $V[G]$ the family $\{ x_G\}\cup [\{ s: (\exists p\in G)(\exists t\in s_p^b) s\subseteq t \}]$ is not $a.d.$, where $x_G=\bigcup\{ x_p: p\in G \}$ is a generic real.
\end{claim}
\begin{proof} It is easy to show that the $\{ D_n: n\in\omega\}$ is family of dense sets in $Q$, where
$$
D_n=\{ p\in Q: |x_p|\ge n\land (\forall s\in s_p^b)(\exists m>n) x_p(m)=s(m)\}
$$
for each $n\in\omega$.
\end{proof}

\begin{claim}\label{x_G_mad} If $G\subseteq Q$ is $Q$-generic over $V$. Then for any perfect subtree $S\in ST\cap V$ of $T$ the $[S]\cap [\{ \bigcup s_p^g: p\in G\}]\ne\emptyset$ and $[S]\cap [\{ s: (\exists p\in G)(\exists t\in s_p^g) s\subseteq t \}]\ne\emptyset$ is hold in the generic extension $V[G]$. Moreover, in $V[G]$, for any old real $h\in \omega^\omega\cap V$, $x_G\cap h$ is finite.
\end{claim}
\begin{proof} Choose any perfect subtree $S$ of the tree $T$ from the ground model $V$. Observe that $D^g_n=\{ p\in Q:\; (\forall s\in s_p^g)(n< |s|)\}$ and $D^b_n=\{ p\in Q:\; (\forall s\in s_p^b)(n< |s|)\}$ are dense for every $n\in\omega$ and $F_S=\{ p\in Q:\; S\in \cf_p\}$ is also dense in $Q$.
Then by induction we can to build the decreasing sequence $(r_n)_{n\in\omega}\in Q^\omega$, sequence
$(p_n)_{n\in\omega}\in Q^\omega$ in the poset $Q$ and $(t_n)_{n\in\omega}\in ({\omega^{<\omega})}^\omega$ such that
\begin{itemize}
 \item $G\ni r_n\le p_n\in D^g_n$ for every $n\in\omega$,
 \item $t_n\subseteq t_{n+1}\in S$ and $t_n\in s_{r_n}^g$ for every $n\in\omega$.
\end{itemize}
Then the real $t=\bigcup\{ t_n\in \omega\}$ whiteness the fact that $t\in [S]\cup [\{s: (\exists p\in G)(\exists t\in s_p^g) s\subseteq t \}]$. The same argument provide us to the existence a real $s\in \omega^\omega$ such that $s\in [S]\cup [\{ s: (\exists p\in G)(\exists t\in s_p^g) s\subseteq t\}]$.

Consider the following families $\{ D_n: n\in\omega\}$, $\{ E_h:\; h\in \omega^\omega\}$ of subsets of $Q$ defined as follows:
$$
D_n=\{ p\in Q:\; n< |x_p|\},\;
E_{h}=\{ p\in Q:\; h\in\ch_p\}
$$
It is easy to see that $E_h$ is dense in $Q$ for each $h\in 2^\omega\cap V$. Then choose any condition $p\in G\cap E_h$ and any positive integer $n\in\omega$ then there exists a condition $q\in G\cap D_n$ then one can find an extension $p_n$ of $p,r$ which is in generic filter $G$. Then $x_{p_n}\cap h=x_p\cap h\subseteq x_p$ for every $n\in\omega$ then $x_G\cap h\subseteq x_p$ which is finite one.
\end{proof}

Now to prove our Theorem we use the following Claim.
\begin{claim}\label{newreals_mad} In $V[G]$, for every real $h\in \omega^\omega\cap V$ in ground model and for every $y\in [\{ s:\; (\exists p\in G)(\exists t\in s_p^g)s\subseteq t \}]\; |y\cap h|$ is finite.
\end{claim}
\begin{proof} Let $G\subseteq Q$ be fixed generic filter over $V$. Choose any ground model real $h\in \omega^\omega\cap V$
and $y$ as is stated in the Claim. Then for every $n\in \omega$ there exists condition $p_n\in G$ such that
$y\restricted_n\subset s_n$ for some $s_n\in s_p^g$ (here $n\le |s|$ of course). As above the set $E_h=\{ p\in Q: h\in
\ch_p\}$ is dense in $Q$ then find a some condition $r\in G\cap E_h$. Now by induction find a  sequences decreasing
$(r_n)_{n\in\omega}\in Q^\omega$ and $(t_n)_{n\in\omega}\in (\omega^{<\omega})^\omega$ such that
\begin{itemize}
    \item $r\ge r_0$ and $G\ni r_n\le p_n$ for all $n\in\omega$ and
    \item $y\restricted_n\subseteq s_n \subseteq t_n$ and $t_n\in s_{r_n}^g$ for every $n\in\omega$ and
    \item $h\cap t_0=h\cap t_n$ and $y\restricted_n=t_n\restricted_n$ for any $n\in\omega$.
\end{itemize}
Then $y=\bigcup\{ t_n\restricted_n:\; n\in\omega\}$ and $h\cap t_0\subseteq h\cap y\restricted_n$ for large enough $n\in\omega$ ($n\ge |t_0|$) and then $h\cap y=h\cap t_0$ which is finite.
\end{proof}
Now let us consider any cardinal $\kappa$ with a uncountable cofinality and finite support iteration $((P_\alpha:\alpha\le \kappa), (\dot{Q}_\beta:\beta<\kappa))$ such that for every we have $\beta<\kappa$ $\force_{P_\beta}\; \dot{Q}_\beta = \hat{Q}$.
Assume that $G_\beta = \{ p\in P_\beta: i_{\beta \kappa}(p)\in G\}$ where $G\supset P_\kappa$ generic filter over $V$ and $\beta<\kappa$. Then there exists $H\subseteq \dot{Q}_{\beta_{G_\beta}}$ generic over universe $V[G_\beta]$ such that $G_{\beta+1}=G_\beta*H$. Now let us define the following family $\ca_\beta = \{ x_{G_{\beta+1}}\}\cup [\{ s: (\exists p\in G_{\beta+1})(\exists t\in s_p^g) s\subseteq t\}]$ and then $\ca=\bigcup\{ \ca_\beta:\beta<\kappa\}$. In $V[G]$ we show that $\ca$ forms {\bf a.d.} and for every $\cb$ {\bf m.a.d.} family containing $\ca$ $S\in ST$ such that $\bigcup \cb\cap [S]\ne\emptyset$ and $(\bigcup\cb)^c\cap [S]\ne\emptyset$ what shows that there exists a {\bf m.a.d.} family which is not $s$-measurable. First of all let observe 
that $Z=\bigcup_{\beta<\kappa} [\{ s: (\exists p\in G_{\beta+1})(\exists t\in s_p^g) s\subseteq t\}]\subseteq [T]$ but $T\in V$ is almost disjoint tree, but this property is $\prod_1^1$ and by Shoenfield Theorem is absolute between transfinite ZFC models. Then the last set $Z$ consists of almost disjoint reals. Let $X=\{ x_{G_\beta}: \beta<\kappa\}$. Let $\alpha<\beta$ then by the Claim \ref{x_G_mad} $x_{G_\alpha}\cap x_{G_\beta}$ is finite. 
Moreover, if $s\in [\{ s: (\exists p\in G_\alpha)(\exists t\in s_p^g) s\subseteq t\}]$ then once again by the Claim \ref{x_G_mad} $x_{G_\beta}\cap s$ is finite. If $\beta<\alpha$ then by the Claim \ref{newreals_mad} $x_{G_\beta}\cap s$ is finite for every $s\in [\{ s: (\exists p\in G_\alpha)(\exists t\in s_p^g) s\subseteq t\}]$. By Claim \ref{s^g_mad} $x_{G_\beta}\cap s$ is also finite for every $s\in[\{ s: (\exists p\in G_\alpha)(\exists t\in s_p^g) s\subseteq t\}]$ and $\beta<\kappa$. This shows that $\ca$ is {\bf a.d.} family.

Now we show that any extension $\cb \supseteq\ca$ to {\bf m.a.d.} family is $s$-nonmeasurable. Choose in $V[G]$ any perfect subtree $S\in ST$ of the our tree $T$. Let $\dot{S}\in V^{P_\kappa}$ be a nice name for a $S$ which is countable. But $P_\kappa$ is an finite support iteration of $c.c.c.$ forcings see Claim \ref{ccc_mad} then there exists $\beta<\kappa$ and $P_\alpha$-name $\dot{S}'$ such that $\dot{S}'_{G_\beta}=S=\dot{S}_G$. Then by the Claim \ref{s^g_mad} $[S]\cap [\{ s: (\exists p\in G_\beta)(\exists t\in s_p^g) s\subseteq t\}]\ne\emptyset$ and by the Claim \ref{s^b_mad} there exists $s\in [\{ s: (\exists p\in G_\alpha)(\exists t\in s_p^b) s\subseteq t\}]\cap [S]$ for which $s\cap x_{G_\beta}$ is infinite so $(\bigcup\cb)^c\cap [S]\ne\emptyset$.
\end{proof} 

\subsection{Consistency of $cov(s_0)<\mathfrak{a}$} In Yorioka paper \cite{Yorioka} is proved that starting in ground model with CH the $\omega_2$ finite support iteration of Hehler forcing gives the model in which $cov(s_0)=\omega_1$ but this forcing adds $\omega_2$ Hechler reals and then any {\bf a.d.} family $\ca\subseteq\omega^\omega$ of size $\omega_1$ can not be a maximal because the Hechler forcing is c.c.c. . Then in this model we have $\omega_1=cov(s_0)<\mathfrak{a}=\omega_2$.

\subsection{Consistency of $\mathfrak{a} < cov(s_0)$}

We start this section with definition which we use in proof of consistency mentioned in the title above.
\begin{definition}[Strong m.a.d. family] A family $\ca\subseteq\omega^\omega$ is called {\bf strongly m.a.d.} if for every countable set $\{ f_i:i\in\omega\}$ of reals avoiding a family $\ca$ there exists a $g\in\ca$ such that $|h\cap f_i|=\omega$. Here $f\in\omega^\omega$ is {\bf avoids} $\ca$ iff for every finite set $\cb\in [\ca]^{<\omega}$ we have $|f\setminus\bigcup \cb|=\omega$.
\end{definition}
Kanstermans (see \cite{Kaster}) showed that under MA the {\bf strongly m.a.d.} family exists.

To show consistency result mentioned above we will use an existence oc the supercompact cardinal. We assume that reader is familiar with the theory of large cardinals. For details we refer to the chapters $20, 31$ of the classical handbook \cite{Jech}  or to the chapter $12$, written by Cummings in \cite{bigbook}.

\begin{definition}[Normal ultrafilter] Let $\kappa$ and $\lambda\ge\gamma$ are uncountable cardinals then we say that $\kappa$-complete ultrafilter $U$ on $[\lambda]^{<\kappa}$ is normal if for every $f:[\lambda]^{<\kappa}\to \lambda$ with $\{ x\in [\lambda]^{<\kappa}: f(x)\in x\}\in U$ implies that there exists a some $\gamma\in\lambda$ such that $\{ x: f(x)=\gamma\}\in U$. Such ultrafilter we can call a normal measure on $[\lambda]^{<\kappa}$.
\end{definition}
For a fixed normal ultrafilter $U$ on $[\lambda]^{<\kappa}$ let $j_U:V\to Ult(V,U)$ be elementary embedding defined by $j_U(x)=[c_x]\in Ult(V,U)$ where $[c_x]$ be equivalence class of the constant  function $c_x(\alpha)=x$ for any $\alpha\in\lambda$.

It is well known that
\begin{theorem} Let $\kappa\le\lambda$ then there exists a normal measure on $[\lambda]^{<\kappa}$ iff there exists an elementary embedding $j:V\to M$ such that
\begin{enumerate}
 \item $(\forall \alpha<\kappa)\; j(\alpha)=\alpha$ and $\kappa<j(\kappa)$ and
 \item $M^\lambda\subseteq M$.
\end{enumerate}
\end{theorem}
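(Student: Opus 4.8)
The plan is to prove this as the classical ultrapower characterization of $\lambda$-supercompactness, establishing both implications. For the direction from a measure to an embedding I would form the ultrapower of the universe by $U$; for the converse I would read off $U$ from the canonical ``seed'' $j[\lambda]=\{j(\alpha):\alpha<\lambda\}$. Throughout I read ``normal measure on $[\lambda]^{<\kappa}$'' as a \emph{fine} normal measure, i.e. I also assume $\{x:\alpha\in x\}\in U$ for every $\alpha<\lambda$. This hypothesis is essential and is \emph{not} a consequence of the regressive-function condition alone: pushing a normal measure on $\kappa$ forward along $\alpha\mapsto\alpha$ produces a $\kappa$-complete ``normal'' measure on $[\lambda]^{<\kappa}$ that fails fineness and yields no closure beyond $M^\kappa\subseteq M$. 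For the same reason I read condition (1) as the sharper $\lambda<j(\kappa)$, which is what genuinely corresponds to the domain $[\lambda]^{<\kappa}$.

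First I would build $M$ and $j$ from $U$. Consider functions $f\colon[\lambda]^{<\kappa}\to V$ modulo $U$, with the usual membership relation, and verify the {\L}o\'{s} theorem. Because $U$ is $\kappa$-complete (hence $\sigma$-complete) the ultrapower is well founded, so let $M$ be its transitive collapse and $j=j_U$ the induced elementary embedding. The critical-point computation then runs as follows: for $\alpha<\kappa$ every $g$ with $\{x:g(x)<\alpha\}\in U$ is constant on a $U$-set by $\kappa$-completeness, whence $j(\alpha)=\alpha$; and taking $f(x)=\mathrm{otp}(x\cap\kappa)$ gives $\kappa\le[f]_U<j(\kappa)$ by fineness and completeness, so $\mathrm{crit}(j)=\kappa$. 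The decisive step is the seed identity $[\mathrm{id}]_U=j[\lambda]$: fineness gives $j(\gamma)=[c_\gamma]_U\in[\mathrm{id}]_U$ for each $\gamma<\lambda$, and normality gives the reverse inclusion, since any $[h]_U\in[\mathrm{id}]_U$ comes from a regressive $h$, hence a $U$-constant one. From $[\mathrm{id}]_U\in j([\lambda]^{<\kappa})$ and $\mathrm{otp}(j[\lambda])=\lambda$ I read off $\lambda<j(\kappa)$, and from $j[\lambda]=[\mathrm{id}]_U\in M$ I obtain $M^\lambda\subseteq M$: the increasing enumeration of $j[\lambda]$ is exactly $j\restricted\lambda$ and is absolute, so $j\restricted\lambda\in M$, and an arbitrary sequence $\langle[f_\xi]_U:\xi<\lambda\rangle$ is coded by the single function $x\mapsto\langle f_\xi(x):\xi\in x\rangle$, whose $U$-class is a function in $M$ with domain $j[\lambda]$ recovering the sequence after composing with $j\restricted\lambda$.

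For the converse I would define $U=\{A\subseteq[\lambda]^{<\kappa}:j[\lambda]\in j(A)\}$. Here $j[\lambda]\in M$ by $M^\lambda\subseteq M$, and $j[\lambda]\in j([\lambda]^{<\kappa})=([j(\lambda)]^{<j(\kappa)})^M$ using $\lambda<j(\kappa)$, so the definition makes sense and $U$ is an ultrafilter. Each remaining property is a one-line elementarity argument: $\kappa$-completeness follows because for $\delta<\kappa$ one has $j(\bigcap_{\beta<\delta}A_\beta)=\bigcap_{\beta<\delta}j(A_\beta)$ (as $j\restricted\delta$ is the identity); fineness follows from $j(\alpha)\in j[\lambda]$; and normality follows because if $\{x:f(x)\in x\}\in U$ then $j(f)(j[\lambda])\in j[\lambda]$, so $j(f)(j[\lambda])=j(\gamma)$ for some $\gamma<\lambda$, which says exactly that $\{x:f(x)=\gamma\}\in U$.

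I expect the main obstacle to be the seed identity $[\mathrm{id}]_U=j[\lambda]$ together with the closure $M^\lambda\subseteq M$ it powers in the forward direction; this is precisely where fineness and normality are both indispensable, and where the counterexample above shows the bare regressive-function hypothesis is insufficient. The matching delicate point in the converse is checking that $U$ genuinely concentrates on $[\lambda]^{<\kappa}$ rather than on some smaller $P_{\kappa'}\lambda$, which is what forces the use of $\lambda<j(\kappa)$ rather than merely $\kappa<j(\kappa)$.
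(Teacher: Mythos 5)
The paper does not actually prove this statement: it is introduced with ``It is well known that'' and no proof is supplied, so there is nothing internal to compare your argument against. Judged on its own, your proposal is the standard Solovay-style proof (as in Jech or Kanamori): ultrapower by $U$, transitive collapse, the seed identity $[\mathrm{id}]_U=j[\lambda]$ obtained from fineness (for $\supseteq$) and normality (for $\subseteq$), closure $M^\lambda\subseteq M$ via $j\upharpoonright\lambda\in M$ and the coding $x\mapsto\langle f_\xi(x):\xi\in x\rangle$; and conversely the derived measure $U=\{A:j[\lambda]\in j(A)\}$, with each property of $U$ read off by elementarity. All of these steps are correct as sketched, and I see no gap in the mathematics.

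Your two emendations of the statement are both justified and worth recording. First, the paper's definition of ``normal ultrafilter'' on $[\lambda]^{<\kappa}$ genuinely omits fineness, and your counterexample is valid: pushing a normal measure on $\kappa$ forward along $\alpha\mapsto\alpha$ (the ordinal viewed as an element of $[\lambda]^{<\kappa}$) yields a $\kappa$-complete ultrafilter satisfying the regressive-function clause whose ultrapower gives only $M^\kappa\subseteq M$; since such a measure exists from a measurable cardinal alone, the literal statement of the theorem is false for $\lambda>2^\kappa$ without fineness. Second, reading (1) as $\lambda<j(\kappa)$ is the standard formulation; nothing is lost in the forward direction, since your argument with $\mathrm{otp}([\mathrm{id}]_U)=\lambda$ and the regularity of $j(\kappa)$ in $M$ delivers the stronger inequality anyway, and in the converse it is exactly what you need to check that $j[\lambda]\in j([\lambda]^{<\kappa})$, i.e.\ that $U$ concentrates on the right set. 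The only point I would ask you to make explicit in a full write-up is the justification that $\mathrm{otp}(x)<\kappa$ for $U$-almost all $x$ (via the regularity of $\kappa$, which itself follows from $\kappa$-completeness plus fineness), used when you place $[f]_U$ below $j(\kappa)$ for $f(x)=\mathrm{otp}(x\cap\kappa)$.
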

If $\kappa$ fulfills the above two conditions then $\kappa$ is called $\lambda$-supercompact cardinal. We say that $\kappa$ is {\bf supercompact} if for every $\lambda\ge\kappa$ the $\kappa$ is a $\lambda$-supercompact cardinal.

To prove the consistency result that $a< cov(s_0)$ we will follow the proof of the consistency of the $PFA$. Then we need to have a Laver function in hands what is guaranteed by the existence of supercompact cardinal. Here we recall the famous Laver Theorem.

\begin{theorem}[Laver]\label{Laver} Let $\kappa$ be supercompact then there exists a function $f:\kappa\to V_\kappa$ (called a {\bf Laver function}) such that for every $\lambda\ge\kappa$ and $x\in H_{\lambda^+}$ there exist a supercompact measure $U$ on $[\lambda]^{<\kappa}$ and elementary embedding $j_U:V\to M$ such that $j_U(f)(\kappa)=x$ holds.
\end{theorem}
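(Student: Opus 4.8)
The plan is to prove the existence of a Laver function by a diagonalization argument that exploits the reflection available from supercompactness. First I would fix, once and for all, a canonical well-ordering $\prec$ of the class of pairs $(\lambda, x)$ with $\lambda \ge \kappa$ and $x \in H_{\lambda^+}$, ordered primarily by $\lambda$ and secondarily by a fixed well-ordering of each $H_{\lambda^+}$. I then assume, toward a contradiction, that \emph{no} Laver function exists, and I build a single candidate $f \colon \kappa \to V_\kappa$ by transfinite recursion that anticipates every potential failure. The idea is that the supercompactness of $\kappa$ will reflect the first failure of $f$ down below its own critical point, where $f$ has already been rigged to record exactly that failure, producing a contradiction.

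The recursion is designed so that $f(\alpha)$ codes the $\prec$-least counterexample to $f \restricted \alpha$ being a Laver function at the level $\alpha$: viewing $f \restricted \alpha$ as a map $\alpha \to V_\alpha$, I ask whether, relative to the structure $V_\alpha$, there is a pair $(\gamma,y)$ witnessing that $f \restricted \alpha$ fails to guess $y$ through any $\alpha$-supercompactness embedding, and if so I let $f(\alpha)$ code the least such witness, setting $f(\alpha) = \emptyset$ otherwise. Relativizing the defining predicate to $V_\alpha$ is what simultaneously makes the recursion well-posed and forces the coded witness to have rank below $\alpha$, so that $f$ genuinely maps $\kappa$ into $V_\kappa$.

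Now suppose $f$ is not a Laver function. Then there is a $\prec$-least pair $(\lambda, x)$ such that $j_U(f)(\kappa) \neq x$ for every normal measure $U$ on $[\lambda]^{<\kappa}$. I would fix such a measure $U$ and the associated embedding $j = j_U \colon V \to M$, so that $\mathrm{crit}(j) = \kappa$, $j(\kappa) > \lambda$, and $M^\lambda \subseteq M$ (whence also $H_{\lambda^+} \subseteq M$). By elementarity $j(f)$ is produced in $M$ by exactly the image recursion, so $j(f)(\kappa)$ is read off as the value the recursion assigns at stage $\kappa$ to $j(f) \restricted \kappa = f$; concretely, it is the pair that $M$ regards as the least counterexample to $f$ being a Laver function at $\kappa$. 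The whole argument comes down to verifying that, from the standpoint of $M$, this least counterexample is precisely $(\lambda, x)$, for then $j(f)(\kappa) = x$, flatly contradicting the choice of $(\lambda, x)$.

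The main obstacle, and the only genuinely delicate point, is this last verification, i.e.\ that $V$ and $M$ agree on the counterexamples of $\prec$-rank at most $(\lambda, x)$. One direction is easy: since $M \subseteq V$ and the ultrapower value $j_U(f)(\kappa)$ for a measure $U \in M$ is computed identically in $M$ and $V$, any pair that is a counterexample in $V$ — in particular $(\lambda, x)$ itself — remains one in $M$, so $M$'s least counterexample is $\preceq (\lambda, x)$. The hard half is ruling out \emph{spurious} smaller counterexamples in $M$: because $M$ has access to fewer supercompactness measures than $V$, a pair $(\lambda', x') \prec (\lambda, x)$ that is guessed in $V$ by some measure $U' \in V$ might, a priori, fail to be guessed by any measure lying in $M$, and would then be wrongly seen by $M$ as a counterexample. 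The resolution is exactly the closure hypothesis $M^\lambda \subseteq M$: it guarantees that enough of the witnessing structure on $[\lambda']^{<\kappa}$ for each such smaller pair is reflected into $M$, so that $M$ still sees a guessing measure and hence does not acquire any counterexample below $(\lambda, x)$. Establishing this reflection of witnesses is the technical heart of the proof; once it is in place, $M$'s least counterexample coincides with $V$'s, $j(f)(\kappa) = x$, and the contradiction follows.
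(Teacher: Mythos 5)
The paper does not actually prove this theorem: it is quoted as a known result (Laver's theorem) and used as a black box, so there is no in-paper argument to compare against. Your proposal is a sketch of Laver's original diagonalization, and the overall architecture (assume no Laver function, recursively let $f(\alpha)$ record a minimal counterexample for $f\restricted\alpha$, then reflect via an elementary embedding) is the right one. However, at the step you yourself flag as ``the technical heart'' there is a genuine gap, and your proposed resolution of it is wrong. You claim that the closure $M^\lambda\subseteq M$ suffices for $M$ to see, for each pair $(\lambda',x')\prec(\lambda,x)$, a normal measure on $[\lambda']^{<\kappa}$ that guesses $x'$. But such a measure is an element of $\cp(\cp([\lambda']^{<\kappa}))$ and has hereditary cardinality up to $2^{2^{(\lambda')^{<\kappa}}}$, which can vastly exceed $\lambda$; closure under $\lambda$-sequences gives you only $H_{\lambda^+}\subseteq M$ and says nothing about membership of these measures in $M$. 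So a measure on $[\lambda]^{<\kappa}$ itself, as you use, does not have enough closure to rule out spurious counterexamples in $M$, and the argument breaks exactly where you need it.

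The standard repair changes the argument in two ways. First, one takes the embedding $j=j_U$ for a normal measure $U$ on $[\nu]^{<\kappa}$ with $\nu$ much larger than $\lambda$ (e.g.\ $\nu\ge 2^{2^{\lambda^{<\kappa}}}$), so that $M^\nu\subseteq M$ captures the relevant measures; the measure on $[\lambda]^{<\kappa}$ demanded by the theorem is then \emph{derived} from $j$ at the end via $U_0=\{X\subseteq[\lambda_0]^{<\kappa}: j''\lambda_0\in j(X)\}$. Second, rather than proving that $M$'s least counterexample coincides with $V$'s (which is what forces you to control all smaller pairs), one lets $\langle\lambda_0,x_0\rangle=j(f)(\kappa)$ be whatever $M$ takes as its least counterexample, notes $\lambda_0\le\lambda$ since $(\lambda,x)$ is still a counterexample in $M\subseteq V$, and then shows $U_0\in M$ and $j_{U_0}(f)(\kappa)=x_0$ (via the factor embedding $k:\mathrm{Ult}(V,U_0)\to M$), so that $U_0$ witnesses inside $M$ that $\langle\lambda_0,x_0\rangle$ is not a counterexample after all --- a direct contradiction that never requires $M$ and $V$ to agree below $(\lambda,x)$. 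Separately, your definition of $f$ needs more care than ``relativize to $V_\alpha$'': you must arrange both that $f(\alpha)\in V_\kappa$ (by only recording counterexamples whose level $\gamma$ and witness $y$ lie below $\kappa$) and that the true counterexample $(\lambda,x)$, which may have very large rank, is still recognized by $M$ as a counterexample at $\kappa$ in the sense of the image recursion; these two requirements pull in opposite directions and reconciling them is part of the content of Laver's proof.
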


\begin{theorem} Under existence a supercompact cardinal it is relatively consistent with ZFC theory that $\omega_1=cof(\cn)=\mathfrak{a} < cov(s_0)=\omega_2=2^\omega$.
\end{theorem}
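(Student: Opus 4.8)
The plan is to mimic the Laver--Baumgartner template for the consistency of $PFA$, but to iterate only forcings that drive $cov(s_0)$ up to the continuum while holding $\mathfrak{a}$ and the whole lower part of Cicho\'{n}'s diagram down at $\omega_1$. I would start in a ground model $V\models GCH$ in which $\kappa$ is supercompact and carries a Laver function $f:\kappa\to V_\kappa$ as in Theorem~\ref{Laver}, and fix once and for all a \emph{Sacks-indestructible} maximal almost disjoint family $\ca^\ast\subseteq\omega^\omega$ of size $\omega_1$ (available from $CH$ by the usual fusion construction); $\ca^\ast$ will witness $\mathfrak{a}=\omega_1$ in the extension. The forcing is a countable support iteration $\<P_\alpha,\dot Q_\alpha:\alpha<\kappa\>$, $P=P_\kappa$, of proper forcings of hereditary size $<\kappa$, in which the Sacks stages raise $cov(s_0)$ and interleaved $\sigma$-closed collapses $\mathrm{Coll}(\omega_1,\gamma)$ reset the continuum so that $\kappa=\omega_2=\c$ holds in $V[G]$.

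At stage $\alpha$ I would consult $f(\alpha)$: if it is (forced to be) a $P_\alpha$-name for an $\omega_1$-sequence $\<\dot A_\xi:\xi<\omega_1\>$ of $s_0$-sets, put $\dot Q_\alpha=$ Sacks forcing; otherwise let $\dot Q_\alpha$ be the $\sigma$-closed collapse needed to keep the continuum bounded. The one fact about Sacks forcing I would isolate is the avoidance lemma: for $A\in s_0$ coded in a model $M$ the set $\{T\in\mathrm{Sacks}:[T]\cap A=\emptyset\}$ is dense in $M$, since the very definition of $s_0$ produces, inside any $[T]$, a perfect subset disjoint from $A$; hence the Sacks real misses every ground-model $s_0$-set, and meeting the $\omega_1$ dense sets attached to $\<A_\xi\>$ places it outside $\bigcup_\xi A_\xi$. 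Since Sacks forcing has the Sacks property and the collapses add no reals, the preservation theorem for the Sacks property makes $P$ itself have the Sacks property; this is $\omega^\omega$-bounding and traps every new real in a $V$-slalom, forcing $cof(\cn)=\omega_1$ (and so $\mathfrak{d}=non(\cn)=\omega_1$). The Sacks stages keep $\ca^\ast$ maximal by indestructibility, the $\sigma$-closed stages add no reals, and countable-support preservation carries maximality through limits, so $\mathfrak{a}=\omega_1$; the $\kappa$-c.c. of $P$ together with the collapses, which send every cardinal below $\kappa$ to $\omega_1$, gives $\kappa=\omega_2=2^\omega$.

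The remaining, and hardest, point is $cov(s_0)=\omega_2$, which I would obtain by reflection. Suppose toward a contradiction that in $V[G]$ some $\omega_1$-family $\ca'=\<A_\xi:\xi<\omega_1\>$ of $s_0$-sets covers the reals, with $P$-name $\dot{\ca'}$. Choosing $\lambda$ so large that $\dot{\ca'}\in H_{\lambda^+}$, Theorem~\ref{Laver} supplies an elementary $j:V\to M$ with $\mathrm{crit}(j)=\kappa$, $M^\lambda\subseteq M$ and $j(f)(\kappa)=\dot{\ca'}$. As $\mathrm{crit}(j)=\kappa$ we have $j(P)\restricted\kappa=P$, and by the choice $j(f)(\kappa)=\dot{\ca'}$ the stage-$\kappa$ iterand of $j(P)$ is Sacks forcing. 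Passing to a further extension, fix a $j(P)$-generic $H\supseteq G$ whose stage-$\kappa$ coordinate is a Sacks real $s$ over $M[G]$ and lift $j$ to $j:V[G]\to M[H]$. Because $M^\lambda\subseteq M$ with $\lambda\ge|P|$, $M[G]$ and $V[G]$ share the same reals and compute both $\<A_\xi\>$ and the predicate ``$A\in s_0$'' identically, the witnessing statement being absolute; so the avoidance lemma yields $s\notin\bigcup_\xi A_\xi$.

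The crux, and the step I expect to fight with, is turning this into a contradiction. By elementarity $M[H]\models$ ``$\<j(A_\xi):\xi<\omega_1\>$ covers the reals,'' and $s$ is such a real, so $s\in j(A_{\xi_0})$ for some $\xi_0<\omega_1$. This collides with $s\notin\bigcup_\xi A_\xi$ exactly when one can prove $(\star)$: every $j(A_\xi)$ already lies in $V[G]$, i.e. the name $j(\dot A_\xi)$ mentions only coordinates below $\kappa$, is therefore evaluated by $G$ alone, and satisfies $j(A_\xi)\cap(\omega^\omega)^{M[H]}=A_\xi$ with no real beyond those of $V[G]$. Granting $(\star)$, the genuinely new real $s\notin V[G]$ lies in no $j(A_\xi)$, contradicting $s\in j(A_{\xi_0})$. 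Establishing $(\star)$ is the technical heart: it rests on $\mathrm{crit}(j)=\kappa$ and the countable-support bookkeeping, which fix the conditions of $P=j(P)\restricted\kappa$ and drive the support of each $j(\dot A_\xi)$ below $\kappa$, while the regularity of $\kappa$ and $M^\lambda\subseteq M$ let the full $\omega_1$-family be reflected simultaneously. The contradiction gives $cov(s_0)\ge\omega_2$, and since $\c=\omega_2$ we get $cov(s_0)=\omega_2$. Together the four computations establish $\omega_1=cof(\cn)=\mathfrak{a}<cov(s_0)=\omega_2=2^\omega$.
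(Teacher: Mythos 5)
Your overall architecture agrees with the paper's: a Laver-prepared countable support iteration of Sacks forcing and $\sigma$-closed collapses below a supercompact $\kappa$, the Sacks property of the iteration giving $cof(\mathcal{N})=\omega_1$, and a ground-model m.a.d.\ family preserved through the iteration giving $\mathfrak{a}=\omega_1$ (the paper makes your ``Sacks-indestructible'' family precise by taking a strongly m.a.d.\ family \`a la Kastermans and citing Raghavan's preservation theorem for countable support Sacks iterations --- you do need a preservation theorem for the whole iteration, not just for a single Sacks step). Your avoidance lemma is exactly the paper's family $D_A=\{p\in\mathbb{S}:A\cap[p]=\emptyset\}$.

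The gap is at the crux you yourself flag: $(\star)$ is not merely unproven, it is false in general, and the contradiction cannot be reached along that route. A name $\dot A_\xi$ for an $s_0$-set of size up to $\mathfrak{c}=\kappa$ has no reason to have support bounded below $\kappa$; hence $j(\dot A_\xi)$ has support unbounded in $j(\kappa)$, its evaluation $j(A_\xi)$ genuinely depends on the part of $H$ beyond stage $\kappa$, and typically $j(A_\xi)\supsetneq A_\xi$ contains reals of $M[H]\setminus V[G]$ --- quite possibly your Sacks real $s$. Knowing $s\notin\bigcup_\xi A_\xi$ (which is true, since $s$ is Sacks-generic over a model containing all of $V[G]$'s dense subsets of $\mathbb{S}$) says nothing about $s\notin\bigcup_\xi j(A_\xi)$, so there is no conflict with ``$j(\mathcal{A}')$ covers $\mathbb{R}^{M[H]}$.'' The repair is the standard Baumgartner argument, which is what the paper invokes when it asserts the existence of $\mathcal{D}$-generic filters for every $\mathcal{D}\in[\mathbb{S}]^{\omega_1}$: it never examines $j(A_\xi)$ pointwise. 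Let $g$ be the stage-$\kappa$ Sacks generic inside $H$; then $j''g\in M[H]$ generates a filter on $j(\mathbb{S})$ meeting $j(D_{A_\xi})\supseteq j''D_{A_\xi}$ for each $\xi$, and since $crit(j)=\kappa>\omega_1$ gives $j(\langle D_{A_\xi}\rangle_\xi)=\langle j(D_{A_\xi})\rangle_\xi$, the model $M[H]$ satisfies ``there is a filter meeting every set in $j(\mathcal{D})$''; elementarity pulls this back to a $\mathcal{D}$-generic filter on $\mathbb{S}$ in $V[G]$, and after adjoining the stem-lengthening dense sets to $\mathcal{D}$ such a filter determines a real outside $\bigcup_\xi A_\xi$. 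This gives $cov(s_0)\ge\omega_2$ directly, with no covering hypothesis and no contradiction needed.
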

\begin{proof} Let $V\hold GCH$ and assume that $\kappa$ be a supercompact and $f:\kappa\to V_\kappa$ be a Laver function as above see Thm \ref{Laver}. Then let us consider a forcing notion $((P_\alpha:\alpha\le\kappa),(\dot{Q}_\alpha:\alpha<\kappa))$ with a countable support iteration such that for every $\alpha<\kappa$ $\dot{Q}_\alpha=f(\alpha)$ whenever $f(\alpha)\in \{ \dot{\bbs},\dot{Coll}(\omega_1,\alpha)\}\subseteq V^{P_\alpha}$ and $\dot{Q}_\alpha$ is a $P_\alpha$-name for a trivial forcing in the other case.

Let observe that $\{\alpha<\kappa: \dot{Q}_\alpha=\dot{\bbs}\}$ is unbounded in $\kappa$ and the same for $\dot{Coll}(\omega_1,\alpha)$ which implies that in generic extension $V[G]$ $\omega_2=\kappa$ and $\kappa=2^\omega$ is hold.

Now repeating the arguments in proof of the consistency of the PFA we show that in $V[G]$ for any family $\cd\in [\bbs]^{\omega_1}$ of dense sets of the Sacks forcing $\bbs$ there exists a $\cd$-filter $H\subseteq \bbs$.

In generic extension $V[G]$ now fix a family $\ca\in [s_0]^{<\kappa}$ and consider a family $\cd=\{ D_A:\; A\in\ca\}$ of dense subsets of Sacks forcing $\bbs$ defined as follows:
$$
D_A=\{ p\in\bbs: A\cap [p]=\emptyset\}.
$$
This is easy to see that $\bigcap\{ p\in H:(\exists A\in\ca) p\in H\cap D_A\}$ is nonempty set (consists a Sacks real) and is disjoint from $\bigcup\ca$. Moreover it is well known that the iteration with countable support preserves the Sacks property and thus $cof(\cn)=\omega_1$ see \cite{Miller-1981} or \cite{BaJud}, \cite{Shelah}.

$\bbp_\kappa$ is an countable support iteration of proper forcings then $\omega_1$ is not collapsed in generic extension. Moreover the mentioned $\bbp_\kappa$ is an iteration of the length $\kappa$ (which is regular) of forcings of the size less than $\kappa$ then $\bbp_\kappa$ is $\kappa$-c.c. forcing then $\kappa$ remains a cardinal in the generic universe $V[G]$. $\bbp_\kappa$ adds at least $\kappa$ new Sacks reals then in $V[G]$ $\omega_1<\kappa\le\c$. 

Moreover, $V\hold GCH$ and then for each $\alpha<\kappa$ we have $|P_\alpha|<\kappa$ and for every real $x$ in $V[G$ there exists name $\dot{x}$ for $x$ such that $supp(\dot{x})<\kappa$ what implies that $\c\le \kappa$. 

Let observe that for any $\alpha<\kappa$ and for any $\beta<\omega_1$ the set
$$
D_{\alpha,\beta}=\{ p\in Coll(\omega_1,\alpha): \beta\in range(p) \}
$$
is dense in forcing notion with a countable size of conditions $Coll(\omega,\kappa)$. Then for every $\alpha<\kappa$ there exists a $\cd_\alpha=\{ D_{\alpha,\beta}:\; \beta<\omega_1\}$-generic filter $G_\alpha$, producing a collapsing map from $\omega_1$ onto $\alpha$. Thus this implies that $\kappa=\omega_2^{V[G]}$ and by above $\c=\omega_2^{V[G]}$.

From the other side using preservation Theorem of strongly {m.a.d.}-families by countable support iteration of Sakcs forcing of arbitrary length see \cite{Raghavan} if $\ca\subseteq \omega^\omega$ is a strongly {\bf m.a.d.}-family then in $V[G]$ the family  $\ca\in [\omega^\omega]^{\omega_1}$ remains a {\bf m.a.d.}-family of size $\omega_1$.
The proof of this Theorem is completed.
\end{proof}

%
%

\end{document}